\providecommand{\U}[1]{\protect\rule{.1in}{.1in}}
\def\subneq{\mathop{\raise 0.7ex \hbox{$\subset$}}\!\!\!\!\!\!{\raise -0.6ex\hbox{$\neq$}}\,}
\def\div{\mathop{\rm div}\nolimits}
\def\u{\underline}
\def\QQ{{\ \rlap {\raise 0.4ex \hbox{$\scriptscriptstyle |$}}\hskip -0.2em Q}}
\def\1{{1\hskip-0.25em{\rm l}}}
\def\CC{{\ \rlap{\raise 0.4ex \hbox{$\scriptscriptstyle |$}}\hskip -0.2em C}}
\def\sobre#1#2{\lower 1ex \hbox{ $#1 \atop #2 $ } }
\def\bajo#1#2{\raise 1ex \hbox{ $#1 \atop #2 $ } }
\def\div{{\rm div}}
\def\ep{\varepsilon}
\def\p{\partial}
\def\O{\Omega}
\def\Oe{\Omega^\ep}
\def\u2{{u^\ep \over \ep^2 }}
\def\u3{{\displaystyle {\bar u}^\ep \over \ep^2 }}
\def\div{\mathop{\rm div}\nolimits}
\def\u{\underline}
\def\QQ{{\ \rlap {\raise 0.4ex \hbox{$\scriptscriptstyle |$}}\hskip -0.2em Q}}
\def\1{{1\hskip-0.25em{\rm l}}}
\def\CC{{\ \rlap{\raise 0.4ex \hbox{$\scriptscriptstyle |$}}\hskip -0.2em C}}
\def\sobre#1#2{\lower 1ex \hbox{ $#1 \atop #2 $ } }
\def\bajo#1#2{\raise 1ex \hbox{ $#1 \atop #2 $ } }
\def\div{{\rm div}}
\def\ve{\mathbf{v}^\ep}
\def\ep{\varepsilon}
\def\p{\partial}
\def\O{\Omega}
\def\Oe{\Omega^\ep}
\def\u2{{u^\ep \over \ep^2 }}
\def\u3{{\displaystyle {\bar u}^\ep \over \ep^2 }}
\def\p{\partial}
\newtheorem{theorem}{Theorem}
\newtheorem{corollary}[theorem]{Corollary}
\newtheorem{lemma}[theorem]{Lemma}
\newtheorem{proposition}[theorem]{Proposition}
\newtheorem{remark}[theorem]{Remark}
\begin{document}

\title{A nonlinear effective slip interface law for transport phenomena between a fracture flow and a porous medium}
\author{Anna Marciniak-Czochra \thanks{AM-C  was supported by ERC Starting Grant "Biostruct" and Emmy Noether Programme of German Research Council (DFG).}
\\Institute of Applied Mathematics\\
Interdisciplinary Center of Scientific Computing and BIOQUANT,\\
University of Heidelberg \\
Im Neuenheimer Feld 294,
69120 Heidelberg  \\ GERMANY \\ ({\tt anna.marciniak@iwr.uni-heidelberg.de})
\and Andro Mikeli\'c\thanks{The research of A.M.
was partially supported by  {  the  Programme Inter Carnot Fraunhofer from BMBF (Grant 01SF0804) and ANR.}. He is grateful to the Ruprecht-Karls-Universit\"at Heidelberg and the Heidelberg Graduate School of Mathematical and Computational Methods for the Science (HGS MathComp) for giving him good working conditions through the W. Romberg Guest Professorship 2011-2013.}
 \\  Universit\'e de Lyon, CNRS UMR 5208,\\
  Universit\'e Lyon 1, Institut Camille Jordan, \\   43, blvd. du 11 novembre 1918,
 69622 Villeurbanne Cedex,\\ FRANCE \\(\texttt{mikelic@univ-lyon1.fr})  } \maketitle

\begin{abstract}
We present modeling of an incompressible viscous flow through a fracture
adjacent to a porous medium. We consider a fast stationary flow,
predominantly tangential to the porous medium. Slow flow in such setting can
be described by the Beavers-Joseph-Saffman slip. For fast flows, a nonlinear
filtration law in the porous medium and a non- linear interface law are
expected. In this paper we rigorously derive a quadratic effective slip
interface law which holds for a range of Reynolds numbers and fracture
widths. The porous medium flow is described by the Darcys law. The result
shows that the interface slip law can be nonlinear, independently of the
regime for the bulk flow. Since most of the interface and boundary slip laws
are obtained via upscaling of complex systems, the result indicates that
studying the inviscid limits for the Navier-Stokes equations with linear slip law at the boundary should be rethought.
\end{abstract}

\section{Introduction}

 Coupling between a fast viscous  incompressible fracture flow  and an adjacent filtration through porous medium  occurs in a wide range of industrial processes and natural phenomena. The classical approach is to model the fracture flow using the lubrication approximation and to replace it by an interface condition.
 Subsequently, it is coupled with a porous medium flow,  described for small Reynolds numbers  by the Darcy's law and by the Forchheimer's law in  the case of large Reynolds' number.

Study of the coupling between slow viscous incompressible fracture flow and a porous medium was undertaken in \cite{BMPAM:94} and \cite{BMPAM:95}. For the critical fracture width, the interface condition linked to the Reynolds' equation from lubrication was found.

To describe a contact between a porous medium and a large  fracture with the width significantly larger than the pore size, the following effective slip interface law was established in the seminal work by Beavers and Joseph \cite{BJ},
\begin{equation}\label{AUX}
    \sqrt{K} \frac{\partial v_{\tau}}{ \partial \mathbf{n}} =\alpha_{BJ}  v_\tau +
O(K),
\end{equation}
where $\alpha_{BJ}$ is a dimensionless parameter
depending on the geometrical structure of the porous medium
   and $ K$ is the scalar permeability. $v_\tau$ is the tangential velocity and $\mathbf{n}$ is the unit normal exterior to the fluid region. Note that in the original version of the law (\ref{AUX}), $v_{\tau}$ was replaced by the difference between $v_\tau$ and the tangential Darcy velocity at the interface. In \cite{SAF}, Saffman remarked that  the tangential Darcy velocity at the interface is of order $O(K)$. Then, the slip law without the tangential Darcy velocity at the interface (\ref{AUX}) became generally accepted.

The rigorous derivation of the law by Beavers and Joseph through a homogenization limit and by constructing the interface boundary layer  was done by J\"ager and colleges in \cite{JaMi2}, \cite{JM00} and \cite{JMN01}. The pressure jump at the interface was studied analytically in  \cite{AMCAM2011} and using numerical simulations in \cite{CGMCM:13}. For the review of the results we refer to \cite{JaegMik09}, \cite{MIK00} and \cite{DQ09}.

Sahraoui and Kaviany investigated in \cite{SahKav92} a flow at the interface between a fracture and a porous medium by direct numerical simulations. The interest of this work was in the interface laws in presence of large Reynolds' numbers.   The interface slip behavior in that case turned out to be complex. It was concluded that the flow inertia effects appear independently from the bulk nonlinear filtration in the porous medium. If $\ep$ is a characteristic nondimensional pore size, then for longitudinal Reynolds' numbers of order $O(1/ \ep )$,  numerical simulations indicate that the slip law ceases to be linear. The inertia forces at the interface become significant for Reynolds' numbers of order $O( 0.1 / \ep )$. Then, the slip coefficient $\alpha_{BJ}$ increases. For the bulk porous medium flow, the nonlinear effects become visible only for Reynolds' numbers greater than $O(3/ \ep)$.    { Those observations led to a conclusion that  $\alpha_{BJ}$ depends on the Reynolds' number, \cite{K95} and \cite{IL}.  Similar conclusion   is in \cite{LiuP:2011}.}

{ However, it seems that a linear slip law, even with the slip coefficient depending on Reynolds' number,
is not enough for an accurate approximation and that a nonlinear slip law should be derived. We will justify it by constructing rigorously an accurate approximation to the velocity field and showing that it leads to a quadratic slip law.}

{ In the present paper we aim to identify a setting corresponding to a nonlinear slip law. We show that
 for a range of values of Reynolds' number and fracture width, the homogenization leads to a nonlinear interface law, even though  the bulk filtration remains of the Darcy type. To streamline the presentation, we focus on a mathematical model in a simple setting. We consider a constant driving force, present only in the fracture and{, for simplicity, impose }periodic longitudinal boundary conditions { for the velocity and for the pressure}. Such simplification allows to avoid handling the pressure field and the outer boundary layers. The general case of nonstationary flows with physical boundary conditions and forcing terms will be considered in forthcoming papers.} %

The paper is organized as follows: In section \ref{MR}, we define the problem as a stationary incompressible Navier-Stokes flow with Reynolds' number of the order $\ep^{-\gamma}$ and the fracture width of the order $\ep^{\delta}$. Assuming a relation between $\gamma$ and $\delta$, allows us to obtain an approximation which satisfies a nonlinear slip law (\ref{Av3}), while keeping a linear filtration in a porous medium. In section \ref{BJSec} we construct the approximation and prove that it provides a higher order approximation to the original problem.

 \section{Main result}\label{MR}

\subsection{Geometry}

 We consider a two dimensional periodic porous medium $\Omega_2 =
(0,1)\times (-1, 0)$ with a periodic arrangement of the pores. The formal
description goes along the following lines: \vskip0pt
First, we define the geometrical
structure inside the unit cell $Y = (0,1)^2$. Let
$Y_s$ (the solid part) be a closed strictly included subset of $\bar{Y}$, and $Y_F =
Y\backslash Y_s$ (the fluid part). Then,  we introduce a periodic
repetition of $Y_s$ all over $\mathbb{R}^2$ and set $Y^k_s = Y_s + k $, $k
\in \mathbb{Z}^2$. Obviously, the resulting set $E_s = \bigcup_{k \in
\mathbb{Z}^2} Y^k_s$ is a closed subset of $ \mathbb{R}^2$ and $E_F =  \mathbb{R}^2
\backslash E_s$ in an open set in $ \mathbb{R}^2$.  We suppose that
 $Y_s$ has a smooth boundary. 
  Consequently,  $ E_F $ is connected and
$E_s$ is not.  Finally, we notice that $\Omega_2$ is covered with a
regular mesh of size $ \varepsilon$, each cell being a cube
$Y^{\varepsilon}_i$, with $1 \leq i \leq N(\varepsilon) = \vert
\Omega_2 \vert  \varepsilon^{-2} [1+ o(1)]$. Each cube
$Y^{\varepsilon}_i$ is homeomorphic to $Y$, by linear homeomorphism
$\Pi^{\varepsilon}_i$, being composed of translation and a homothety
of ratio $1/ \varepsilon$.

We define
$\displaystyle
Y^{\varepsilon}_{S_i} = (\Pi^{\varepsilon}_i)^{-1}(Y_s)$ and
$Y^{\varepsilon}_{F_i} =
(\Pi^{\varepsilon}_i)^{-1}(Y_F).
$
For sufficiently small $\varepsilon > 0 $, we consider a set
$ \displaystyle T_{\varepsilon} = \{k \in  \mathbb{Z}^2  \vert  Y^{\varepsilon}_{S_k} \subset
\Omega_2 \} $
and define
$$
O_{\varepsilon} = \bigcup_{k \in T_{\varepsilon}}
Y^{\varepsilon}_{S_k} , \quad S^{\varepsilon} = \partial
O_{\varepsilon}, \quad \Omega^{\varepsilon}_2 = \Omega_2 \backslash
O_{\varepsilon} =\O_2 \cap \ep E_F.$$
Obviously, $\partial
\Omega^{\varepsilon}_2 = \partial \Omega_2 \cup S^{\varepsilon}$. The
domains $O_{\varepsilon}$ and $\Omega^{\varepsilon}_2 $ represent the solid and the fluid part of the porous medium $\Omega$, respectively. For simplicity, we assume $1/\varepsilon  \in \mathbb{N}$.

Let $0<\delta <1$. We set $\Sigma =(0,1) \times \{ 0\} $, $\Omega_1^{\ep , \delta} = (0,1)\times (0, \ep^\delta)$ and $\O = (0,1) \times (-1, \ep^\delta)$.
Furthermore, let $\O^\ep = \Oe_2 \cup \Sigma \cup \O_1^{\ep , \delta} $.

In such geometry, homogenization of the Stokes equation with no-slip boundary conditions on $S^\ep$ leads to Darcy law (see \cite{{All97}}, \cite{ESP}, \cite{SP80} and \cite{Ta1980}). In the presence of inertia, a nonlinear corrections to Darcy law arise, as studied in \cite{BMPAM:96}.
\subsection{Position of the problem and the nonlinear slip law}

Let $0<\gamma <3/2$ and let $F$ be a constant. In $\O^\ep$ we study the following stationary Navier-Stokes equation
\begin{gather} -
 \ep^\gamma \Delta \mathbf{v}^\ep +( \mathbf{v}^\ep \nabla ) \mathbf{v}^\ep + \nabla p^{\ep} = F \mathbf{e}^1 { \mathbbm{1}_{\{ x_2 >0 \} }} \qquad \hbox{ in } \quad \Oe
\label{1.3} \\ \div \, \ve = 0 \qquad \hbox{ in } \quad \Oe , \qquad \int_{\O_1^{\ep , \delta}} p^\ep \ dx =0,
\label{1.4} \\ \ve  =0 \quad \hbox{on } \quad \p \Oe \setminus \bigg(  \{ x_1 = 0 \} \cup \{ x_1 =1 \} \bigg)
, \qquad \{ \ve , p^\ep \} \quad \hbox{ is }
1-\hbox{periodic in } \; x_1 . \label{1.5} \end{gather}

\begin{remark}
We skip here a discussion of modeling aspects.  We only mention that $\ep^\gamma$ stands for the inverse of Reynolds' number { and that the small fracture width $\ep^\delta$ prevents creation of the Prandtl's boundary layer}.
\end{remark}
In order to simplify calculations we take a constant $F$. It corresponds to a pressure drop. Additionally,  we assume it only in the fracture $\O_1^{\ep , \delta}$. Let
\begin{equation}\label{WE}
   W^\ep = \{ \mathbf{z} \in H^1 (\Oe )^2 , \;
\mathbf{z}=0 \; \hbox{ on } \; \p \Oe \setminus  \bigg(  \{ x_1 = 0 \} \cup \{ x_1 =1 \} \bigg) \; \hbox{ and }
\; \mathbf{z} \; \hbox{ is } \; 1-\hbox{periodic in } x_1\} .
\end{equation}
The variational form of problem (\ref{1.3})-(\ref{1.5}) reads:\\

Find $\ve \in W^\ep  $, div $\ve =0$
in $\Oe$ and $p^\ep \in L^2 (\Oe )$ such that
\begin{equation}\label{1.6}
    \int_{\Oe} \ep^\gamma \nabla \ve \nabla \varphi \, dx  + \int_{\Oe} (\ve \cdot \nabla ) \ve  \varphi \, dx - \int_{\Oe} p^\ep \hbox{ div }
\varphi \, dx = \int_{\O_1^{\ep , \delta}} F  \varphi_1 \, dx, \qquad \forall \varphi
\in W^\ep .
\end{equation}

Theory of the stationary Navier-Stokes equations with homogeneous boundary conditions results in existence of the least one smooth velocity
field  $\ve \in W^\ep$ ,
 div $\ve =0$ in $\Oe$, which solves (\ref{1.6}) for every $ \varphi \in W^\ep
 , $ div $\varphi =0$ in $\Oe$. The construction of the pressure field
goes through De Rham's theorem. For more details we refer to the classical Temam's book \cite{Tem}.
\vskip10pt

 Now we make assumptions on the parameters $\delta$ and $\gamma$.
 \begin{description}
   \item[(H1)] $2\gamma < 3 \delta $,
     \item[(H2)] $0<\delta<1 $ and $0<\gamma <3/2 ,$
   \item[(H3)] $4\delta <2 \gamma + 1$.
 \end{description}
 Now, we can formulate the main result
  \begin{theorem} \label{T4.177}Let us suppose the hypothesis ({\bf H1})-({\bf H3}) and  let ${\cal U}^{2,\ep }$    be defined by
\begin{gather}
{\cal U}^{2,\ep } =  \ve +{\ep^{2\delta -\gamma}  \frac{F }{ 2    }} \frac{x_2}{\ep^\delta} (\frac{x_2}{\ep^\delta} -
1) \mathbf{e}^1 + \frac{F}{2} \ep^{\delta +1 -\gamma} \beta^{ bl} (\frac{x }{ \ep})  - \frac{F}{2} \ep^{\delta +1 -\gamma} C^{bl}_1 {
\frac{x_2 }{ \ep^\delta}}  \mathbf{e}^1 \notag \\ - \frac{F}{2} C^{bl}_1 \ep^{2 -\gamma} \beta^{ bl} (\frac{x }{ \ep})  + \frac{F}{2} \ep^{2 -\gamma} (C^{bl}_1)^2 {
\frac{x_2 }{ \ep^\delta}}  \mathbf{e}^1
  +(\frac{F}{2})^2 \ep^{2\delta +3 -3\gamma} \beta^{1, bl} (\frac{x }{ \ep})  {-} (\frac{F}{2})^2 \ep^{2\delta +3 -3\gamma} C^{bl}_{11} {
\frac{x_2 }{ \ep^\delta}}  \mathbf{e}^1 ,
\label{coortotal}
\end{gather}
where the boundary layer functions $\beta^{bl}$ and $\beta^{1, bl}$ are defined, respectively, by (\ref{BJ4.2})-(\ref{4.6}) and (\ref{BLRRR5.2})-(\ref{BLRRR5.5}). The constant $C^{bl}_1 <0$ is the stabilization constant for $\beta^{bl}_1$ when $y_2 \to +\infty.$ Similarly $C^{bl}_{11}$ is the stabilization constant for $\beta^{1,bl}_1$ when $y_2 \to +\infty.$\vskip1pt
 Then, the following
estimate holds
\begin{gather}
\ep \Vert \nabla {\cal U}^{2,\ep }   \Vert_{L^2 (\Oe )^4} + \Vert  {\cal U}^{2,\ep }  \Vert_{L^2 (\Oe_2 )^2} +
\ep^{1/2} \Vert  {\cal U}^{2,\ep }  \Vert_{L^2 (\Sigma  )^2} +\ep^{1-\delta} \Vert  {\cal U}^{2,\ep }  \Vert_{L^2 (\O_1^{\ep, \delta} )^2}
 \leq C\ep^{7/2 -\delta -\gamma} .
 \label{4.885}
 \end{gather}
\end{theorem}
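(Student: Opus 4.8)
The plan is to treat $\mathcal{U}^{2,\ep}$ as the error between the true solution $\ve$ and a carefully engineered asymptotic approximation, and to derive the estimate by testing the equation satisfied by this error against itself. First I would write down the linear and nonlinear equations satisfied by the ansatz pieces: the Poiseuille profile $\ep^{2\delta-\gamma}\frac{F}{2}\frac{x_2}{\ep^\delta}(\frac{x_2}{\ep^\delta}-1)\mathbf{e}^1$ solves the rescaled Stokes problem in the fracture with the forcing $F\mathbf{e}^1$ but leaves a nonzero tangential trace and a nonzero normal derivative jump on $\Sigma$; the Beavers--Joseph boundary-layer correction $\beta^{bl}(x/\ep)$ (with its stabilization constant $C^{bl}_1$ subtracted off so it decays as $x_2\to+\infty$) is designed precisely to cancel that jump to leading order, exactly as in \cite{JaMi2}, \cite{JM00}; the second boundary layer $\beta^{1,bl}$ absorbs the leading quadratic term $(\ve\nabla)\ve$ generated by the Poiseuille profile interacting with itself. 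Each successive term in (\ref{coortotal}) is there to kill a residual produced by the previous one, and the scaling exponents $2\delta-\gamma$, $\delta+1-\gamma$, $2-\gamma$, $2\delta+3-3\gamma$ are bookkeeping for these cascading corrections.

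Next I would compute the equation for $\mathcal{U}^{2,\ep}$: substituting (\ref{coortotal}) into (\ref{1.3})--(\ref{1.5}), most terms cancel by construction, and one is left with $-\ep^\gamma\Delta\mathcal{U}^{2,\ep} + (\ve\nabla)\mathcal{U}^{2,\ep} + (\mathcal{U}^{2,\ep}\nabla)(\ve-\mathcal{U}^{2,\ep}) + \nabla\tilde p = \mathcal{R}^\ep$ in $\Oe$, together with $\div\mathcal{U}^{2,\ep}$ equal to a small divergence remainder coming from the boundary layers (which I would correct by a Bogovski-type operator, as is standard), and with small nonzero traces on $S^\ep$ and on the lateral parts of the fracture boundary coming from the tails of $\beta^{bl}$, $\beta^{1,bl}$. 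The residual $\mathcal{R}^\ep$ consists of: the Laplacian of the boundary-layer tails (exponentially small away from $\Sigma$, but contributing algebraically near it), the higher-order nonlinear products not matched by $\beta^{1,bl}$, and the cross terms $\ep^\gamma$ times derivatives of the boundary layers in the slow variable. I would estimate $\|\mathcal{R}^\ep\|$ in $W^{\ep*}$ (dual norm), carefully tracking powers of $\ep$; hypotheses \textbf{(H1)}--\textbf{(H3)} are exactly what make each contribution bounded by $\ep^{7/2-\delta-\gamma}$ after dividing through by $\ep^\gamma$ — for instance \textbf{(H1)} $2\gamma<3\delta$ keeps the quadratic slip term subcritical so that the first nonlinear boundary layer $\beta^{1,bl}$ is well defined and its contribution is lower order, \textbf{(H3)} $4\delta<2\gamma+1$ controls the size of the Poiseuille profile relative to the boundary-layer corrections.

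Then I would test the error equation with $\varphi = \mathcal{U}^{2,\ep}$. The viscous term gives $\ep^\gamma\|\nabla\mathcal{U}^{2,\ep}\|_{L^2(\Oe)^2}^2$; the pressure term vanishes on divergence-free fields (modulo the small divergence correction); the nonlinear term $(\ve\nabla)\mathcal{U}^{2,\ep}\cdot\mathcal{U}^{2,\ep}$ integrates to zero since $\div\ve=0$ and $\ve$ satisfies the boundary conditions, while $(\mathcal{U}^{2,\ep}\nabla)(\ve-\mathcal{U}^{2,\ep})\cdot\mathcal{U}^{2,\ep}$ is the dangerous term — it must be absorbed into the left side using a Poincaré-type inequality in the thin fracture (the $\ep^\delta$ width gives a small constant) together with the a priori bound on $\ve$ and on the ansatz in $L^\infty$ or $L^4$. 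This absorption requires that the combination $\ep^\gamma$ minus (the nonlinear coefficient) stays positive, which is again guaranteed by \textbf{(H1)}--\textbf{(H3)}. Having closed the energy estimate I would obtain $\ep^\gamma\|\nabla\mathcal{U}^{2,\ep}\|_{L^2}^2 \lesssim \ep^{7/2-\delta-\gamma}\cdot\ep\|\nabla\mathcal{U}^{2,\ep}\|_{L^2}$ (after using the dual estimate on $\mathcal{R}^\ep$ and a scaled Poincaré inequality relating the various norms on the left of (\ref{4.885}) to $\ep\|\nabla\mathcal{U}^{2,\ep}\|_{L^2}$), whence the claimed bound; the $L^2(\Oe_2)$, $L^2(\Sigma)$ and $L^2(\O_1^{\ep,\delta})$ contributions follow from appropriate trace and Poincaré inequalities with the correct $\ep$-weights, exactly the weighted Poincaré inequality in perforated/thin domains from \cite{JaegMik09}.

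The main obstacle I expect is the control of the nonlinear term $\int_{\Oe}(\mathcal{U}^{2,\ep}\nabla)(\ve-\mathcal{U}^{2,\ep})\cdot\mathcal{U}^{2,\ep}\,dx$ and, relatedly, establishing sharp $L^\infty$ (or $L^4$) bounds on $\ve$ itself that are uniform in $\ep$ with the right power — the a priori energy bound alone gives $\|\ve\|$ in some negative power of $\ep$, and one needs the approximation $\mathcal{U}^{2,\ep}$ precisely to bootstrap a good pointwise bound on $\ve=\mathcal{U}^{2,\ep}+(\text{explicit ansatz})$. Getting the interplay between the smallness of the fracture width $\ep^\delta$ (which helps via Poincaré) and the largeness of the Reynolds number $\ep^{-\gamma}$ (which hurts) to balance under \textbf{(H1)}--\textbf{(H3)} is the delicate accounting that makes or breaks the proof.
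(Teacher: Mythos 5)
Your overall strategy coincides with the paper's: write the variational equation for the error, test it with $\mathcal{U}^{2,\ep}$ itself, use antisymmetry of the trilinear form to kill the quadratic-in-error convection terms, absorb the surviving inertia into the viscous term via the scaled Poincar\'e/trace inequalities of Lemma \ref{L1} under ({\bf H1}), and bound the residual forcing in the dual norm. The paper merely organizes this as three successive corrections (Theorem \ref{T4.17}, the Corollary, then Theorem \ref{T4.177}) rather than one monolithic estimate, and it needs neither a Bogovski correction nor $L^\infty$/$L^4$ bounds on $\ve$: the ansatz is exactly divergence free, and the only surviving trilinear contribution transports the error along the \emph{explicit} profiles, whose gradients are bounded pointwise by $C\ep^{\delta-\gamma}$, so your worry about bootstrapping pointwise bounds on $\ve$ is a phantom obstacle.

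There is, however, one concrete gap: you never identify which residual actually produces the exponent $7/2-\delta-\gamma$, and your bookkeeping at the final step is inconsistent. The rate in (\ref{4.885}) comes from the \emph{dominant} residual, which is the uncorrected shear-stress jump on $\Sigma$ generated by the second counterflow, $\int_\Sigma \ep^{2-\delta}\varphi_1\frac{F}{2}(C^{bl}_1)^2\,dS$, of dual-norm size $C\ep^{5/2-\delta}$ by the trace inequality (\ref{Poinc1}); hypothesis ({\bf H1}) guarantees that this dominates the leading remaining inertia contribution $C\ep^{2\delta+5/2-2\gamma}\Vert\nabla\mathcal{U}^{2,\ep}\Vert_{L^2(\Oe)^4}$, while ({\bf H3}) was already needed at the previous stage to ensure this shear term is subordinate to the inertia being corrected by $\beta^{1,bl}$. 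The closed energy inequality is therefore $\ep^\gamma\Vert\nabla\mathcal{U}^{2,\ep}\Vert^2\le C\ep^{5/2-\delta}\Vert\nabla\mathcal{U}^{2,\ep}\Vert$, whence $\ep\Vert\nabla\mathcal{U}^{2,\ep}\Vert\le C\ep^{7/2-\delta-\gamma}$. Your displayed inequality $\ep^\gamma\Vert\nabla\mathcal{U}^{2,\ep}\Vert^2\lesssim\ep^{7/2-\delta-\gamma}\cdot\ep\Vert\nabla\mathcal{U}^{2,\ep}\Vert$ is not what one obtains (it would give $\ep\Vert\nabla\mathcal{U}^{2,\ep}\Vert\lesssim\ep^{11/2-\delta-2\gamma}$, which matches the stated rate only for $\gamma=2$, excluded by ({\bf H2})): the residual must be quantified as $\ep^{5/2-\delta}$ \emph{before} dividing by $\ep^\gamma$, and asserting that ({\bf H1})--({\bf H3}) ``make each contribution bounded by $\ep^{7/2-\delta-\gamma}$'' assumes precisely the computation that constitutes the proof.
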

  \begin{remark} { The rigorous result from Theorem \ref{T4.177}}, showing that ${\cal U}^{2,\ep}$ is of order $O(\ep^{3-\delta -\gamma})$ { on $\Sigma$}, allows  justifying  a nonlinear interface law. Contrary to the classical situation, when  Saffman's
modification  of the linear slip law by Beavers and Joseph  (see \cite{BJ} and \cite{SAF}) is used, the nonlinear interface laws are rarely derived in the literature.  However, they are supposed to be appropriate for fast flows.

Setting $\delta = 1- 7 \eta /12$ and $\gamma =3/2 - \eta$, where $0<\eta < 3/2$, which fulfills hypotheses ({\bf H1})-({\bf H3}), we obtain
on the interface $\Sigma$
\begin{eqnarray*}
    v_1 (\ep) |_\Sigma &=&  - \frac{F}{2} \ep^{\delta +1 -\gamma} (1- C^{bl}_1 \ep^{1 -\delta} ) \beta^{ bl} (\frac{x }{ \ep})|_\Sigma  -(\frac{F}{2})^2 \ep^{2\delta +3 -3\gamma} \beta^{1, bl}_1 (\frac{x }{ \ep}) |_\Sigma
       \\
      &=&- \frac{F}{2} \sqrt{\ep} \ep^{5\eta /12} (1- C^{bl}_1 \ep^{7\eta/12} ) \beta^{ bl}_1 (\frac{x }{ \ep})|_\Sigma  -(\frac{F}{2})^2 \sqrt{\ep} \ep^{11\eta /6 } \beta^{1, bl}_1 (\frac{x }{ \ep}) |_\Sigma
\end{eqnarray*}
and for the average over the pore face on $\Sigma$
\begin{gather}
< v_1 (\ep)  |_\Sigma > = v_1^{eff} = - \frac{F}{2} \sqrt{\ep} \ep^{5\eta /12} (1- C^{bl}_1 \ep^{7\eta/12} )  C^{bl}_1 -(\frac{F}{2})^2 \sqrt{\ep} \ep^{11\eta /6 } <\beta^{1, bl}_1 (\frac{x }{ \ep}) |_\Sigma >.
\label{Av1}
\end{gather}
Next, for the shear stress we have
\begin{gather*}
 \frac{\p v_1 (\ep)}{\p x_2} |_\Sigma =  \ep^{\delta -\gamma} \frac{F}{2}
 - \ep^{\delta -\gamma} \frac{F}{2} \frac{\p \beta^{bl}_1 }{ \p y_2} |_{\Sigma , y=x/\ep}  +\ep^{1 -\gamma} \frac{F}{2} C^{bl}_1 + \frac{F}{2} C^{bl}_1 \ep^{1 -\gamma} \frac{\p \beta^{bl}_1 }{ \p y_2} |_{\Sigma , y=x/\ep} \\
 - \ep^{2-\delta -\gamma} \frac{F}{2} (C^{bl}_1)^2
  -(\frac{F}{2})^2 \ep^{2\delta +2 -3\gamma} \frac{\p \beta^{1, bl}_1 (\frac{x }{ \ep})}{\p y_2} |_\Sigma {+} (\frac{F}{2})^2 \ep^{\delta +3 -3\gamma} C^{bl}_{11}.
\end{gather*}
 After averaging over $\Sigma$ with respect to $y_1$, we
obtain
\begin{gather}
<\frac{\p v_1 (\ep)}{\p x_2} |_\Sigma <= \frac{\p v_1^{eff}}{\p x_2} =  \frac{F}{2} \ep^{-1/2 + 5\eta /12} ( 1+ \ep^{7\eta /12} C^{bl}_1 - \ep^{7\eta /6} (C^{bl}_1)^2 ) -\notag \\
 (\frac{F}{2})^2 \ep^{-1/2 + 11\eta /6} ( <\frac{\p \beta^{1, bl}_1 (\frac{x }{ \ep})}{\p y_2} |_\Sigma > { - \ep^{7\eta /12} C^{bl}_{11}} ).
\label{Av2}
\end{gather}
Next, elimination of $F/2$ yields {
\begin{gather}
v_1^{eff} = - C^{bl}_1 \ep \frac{\p v_1^{eff}}{\p x_2} \frac{1 - C^{bl}_1 \ep^{7\eta /12}}{1+ C^{bl}_1 \ep^{7\eta /12} (1- C^{bl}_1 \ep^{7\eta /12})}  \notag \\
-  \ep^{3/2 +\eta}  < \beta^{1, bl} (\frac{x }{ \ep}) |_\Sigma > (\frac{\p v_1^{eff}}{\p x_2})^2 + \;
O(\ep^{3/2+19\eta /12}) .
\label{Av3}
\end{gather}}
The above formula results in Saffman' version of the law by
Beavers and Joseph,  if only the first term at the { right} hand-side is taken into consideration. For small $\eta$, we obtain a significant deviation of the law by Beavers and Joseph from \cite{SAF} and \cite{BJ}. { We are not aware of any rigorous derivation of a nonlinear interface law for the unconfined fluid flow coupled to the porous media flow.}
   \end{remark}

\section{Rigorous justification of the nonlinear slip law, generalizing the law by Beavers and Joseph}\label{BJSec}

  \vskip0pt In this section we extend the
justification of the law of Beavers and Joseph from \cite{JM00} to the case of nonlinear laminar flows. In the proofs we apply the following variant of
Poincar\'e's inequality:

\begin{lemma} \label{L1} (see e.g. \cite{SP80})
Let $\varphi \in V(\Oe_2 ) = \{ \varphi \in H^1 (\Oe_2 ) \ | \varphi =0
 \; \hbox{ on } {S}^\ep \} $ and $\psi \in H^1 (\O_1^{\ep , \delta})$ such that $\displaystyle \psi |_{\{ x_2 = \ep^\delta \}} =0$.
Then, it holds
\begin{gather}
 \Vert \varphi \Vert _{L^2 (\Sigma )} \leq C \ep^{1/2} \Vert  \nabla_x
\varphi \Vert _{L^2 (\Oe_2 )^2}, \label{Poinc1} \\
 \Vert \varphi \Vert _{L^2 (\Oe_2  )} \leq C \ep \Vert  \nabla_x
\varphi \Vert _{L^2 (\Oe_2 )^2},\label{Poinc2} \\
\Vert \psi \Vert _{L^2 (\Sigma  )} \leq C \ep^{\delta /2} \Vert  \nabla_x
\psi \Vert _{L^2 (\O_1^{\ep , \delta})^2},\label{Poinc3} \\
 \Vert \psi \Vert _{L^2 (\O_1^{\ep , \delta}  )} \leq C \ep^\delta \Vert  \nabla_x
\psi \Vert _{L^2 (\O_1^{\ep , \delta})^2}.\label{Poinc4}
\end{gather}
\end{lemma}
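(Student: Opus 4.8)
The plan is to treat the two fracture estimates (\ref{Poinc3})--(\ref{Poinc4}) by elementary one-dimensional integration exploiting the vanishing of $\psi$ on the top of the strip, and the two perforated-domain estimates (\ref{Poinc1})--(\ref{Poinc2}) by reducing everything to two fixed inequalities on the reference fluid cell $Y_F$ and then rescaling cell by cell.

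First I would handle $\psi$. Since $\psi(x_1,\ep^\delta)=0$, for $x_2\in(0,\ep^\delta)$ one writes
$$\psi(x_1,x_2)=-\int_{x_2}^{\ep^\delta}\partial_{x_2}\psi(x_1,s)\,ds ,$$
so that by Cauchy--Schwarz $|\psi(x_1,x_2)|^2\le \ep^\delta\int_0^{\ep^\delta}|\partial_{x_2}\psi(x_1,s)|^2\,ds$. Integrating this in $x_2$ over $(0,\ep^\delta)$ and then in $x_1$ over $(0,1)$ gives (\ref{Poinc4}) with constant $1$; evaluating instead at $x_2=0$ and integrating in $x_1$ gives (\ref{Poinc3}) with constant $1$. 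No solid obstacle enters here, so this part is routine.

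The substance lies in (\ref{Poinc1})--(\ref{Poinc2}). On the reference cell $Y=(0,1)^2$ I would record two facts valid for every $w\in H^1(Y_F)$ with $w=0$ on $\partial Y_s$: the perforated-cell Poincar\'e inequality $\|w\|_{L^2(Y_F)}\le C_0\|\nabla_y w\|_{L^2(Y_F)}$, and the trace inequality $\|w\|_{L^2(\Gamma)}\le C_1\|\nabla_y w\|_{L^2(Y_F)}$ on the upper edge $\Gamma=(0,1)\times\{1\}$. The first holds because $Y_s$ is closed with smooth boundary and positive surface measure, so vanishing of the trace on $\partial Y_s$ (a set of positive capacity in the connected Lipschitz domain $Y_F$) forces the Poincar\'e estimate; the second follows from the standard trace theorem $\|w\|_{L^2(\partial Y_F)}\le C\|w\|_{H^1(Y_F)}$, using $\Gamma\subset\partial Y_F$ (since $Y_s$ is strictly interior to $Y$) and then absorbing $\|w\|_{L^2(Y_F)}$ via the first inequality.

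Next I would transfer these to each $\ep$-cell. Because $1/\ep\in\NN$ and $Y_s$ is strictly interior to $Y$, the domain $\Omega_2=(0,1)\times(-1,0)$ is tiled exactly by $\ep^{-2}$ cells $Y_i^\ep$, all lying in $T_\ep$, with $\Oe_2=\bigcup_i Y_{F_i}^\ep$ a disjoint union. Under the homothety $\Pi_i^\ep$ of ratio $1/\ep$ one has $\|\varphi\|_{L^2(Y_{F_i}^\ep)}^2=\ep^2\|w\|_{L^2(Y_F)}^2$, $\|\nabla_x\varphi\|_{L^2(Y_{F_i}^\ep)}^2=\|\nabla_y w\|_{L^2(Y_F)}^2$, and, for the cells adjacent to $\Sigma$, $\|\varphi\|_{L^2(\Sigma_i)}^2=\ep\|w\|_{L^2(\Gamma)}^2$ with $\Sigma_i=Y_i^\ep\cap\Sigma$. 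Hence the reference inequalities become, on each cell, $\|\varphi\|_{L^2(Y_{F_i}^\ep)}\le C_0\,\ep\,\|\nabla_x\varphi\|_{L^2(Y_{F_i}^\ep)}$ and $\|\varphi\|_{L^2(\Sigma_i)}\le C_1\,\ep^{1/2}\,\|\nabla_x\varphi\|_{L^2(Y_{F_i}^\ep)}$, the single extra power $\ep^{1/2}$ in the trace bound reflecting the one-dimensional measure of $\Sigma_i$. Squaring and summing over all cells (respectively over the $\ep^{-1}$ cells of the top row) and using disjointness with union $\Oe_2$ yields (\ref{Poinc2}) and (\ref{Poinc1}).

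The main obstacle is entirely at the reference level: securing the perforated-cell Poincar\'e constant $C_0$, which is where the geometric hypotheses on $Y_s$ are genuinely used; once $C_0$ and $C_1$ are fixed on the single cell $Y_F$, all $\ep$-dependence is produced by the scaling and is mere bookkeeping. A secondary point to verify is the absence of incomplete boundary cells, guaranteed by $1/\ep\in\NN$ together with the strict interior inclusion of $Y_s$, which simultaneously ensures that each $\Sigma_i$ belongs to the fluid boundary so that the cell trace inequality applies to the row adjacent to $\Sigma$.
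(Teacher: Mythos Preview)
The paper does not give its own proof of this lemma; it simply records the four inequalities and cites \cite{SP80}. Your argument is correct and is precisely the classical one: the strip estimates (\ref{Poinc3})--(\ref{Poinc4}) by one-dimensional integration from the top boundary, and the perforated-domain estimates (\ref{Poinc1})--(\ref{Poinc2}) by fixing a Poincar\'e and a trace inequality on the reference fluid cell $Y_F$ (using that $\partial Y_s$ carries the zero trace) and then transporting them cell by cell via the homothety $\Pi_i^\ep$, with the exact tiling guaranteed by $1/\ep\in\NN$ and the strict interior inclusion of $Y_s$. This is the standard route found in \cite{SP80}, so there is nothing to contrast.
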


\subsection{The impermeable interface approximation}\label{impp}
Intuitively, the main flow is in the fracture $\O_1^{\ep , \delta}$. Following the approach from \cite{JM00} we study the problem
\begin{gather}
- \ep^\gamma \triangle \mathbf{v}^0 + ( \mathbf{v}^0 \nabla ) \mathbf{v}^0 + \nabla p^0 = F \mathbf{e}^1
\qquad \hbox{ in } \O_1^{\ep , \delta} ,\label{4.37}\\
\div \  \mathbf{v}^0 = 0 \qquad \hbox{ in } \O_1^{\ep , \delta} ,\label{4.38}\\
\mathbf{v}^0 = 0  \qquad  \hbox{ on } \p   \O_1^{\ep , \delta}  \setminus \bigg(  \{ x_1 = 0 \} \cup \{ x_1 =1 \} \bigg) \quad
,\label{4.39}\\ \{ \mathbf{v}^0 , p^0 \}    \qquad  \hbox{ is } \;
1-\hbox{periodic in } \; x_1 ,  \qquad \int_{\O_1^{\ep , \delta}} p^0 \ dx =0.\label{4.40}\end{gather}
Therefore, as in \cite{JM00} and \cite{JaegMik09}, for the lowest order
approximation $\{ \mathbf{v}^0 , p^0 \}$ we impose on the interface the no-slip
condition
\begin{equation}\label{BJ3}
    \mathbf{v}^0 = 0 \qquad \mbox{on} \quad \Sigma.
\end{equation}
Such choice leads to a cut-off of the shear and it introduces an error.

 A
solution of  problem (\ref{4.37})-(\ref{4.40})  is
the classic Poiseuille flow in $\O_1^{\ep , \delta}$, satisfying the no-slip
condition at $\Sigma$. It is given by
\begin{equation}\label{4.41}
\mathbf{v}^0 =  -{\ep^{2\delta -\gamma}  \frac{F }{ 2    }} \frac{x_2}{\ep^\delta} (\frac{x_2}{\ep^\delta} -
1) \mathbf{e}^1
\; \hbox{  for} \quad 0\leq x_2 \leq \ep^\delta ; \qquad 
 p^0 = 0 \; \hbox{  for }\quad 0\leq x_1 \leq 1.
\end{equation}
Concerning the normal derivative of the tangential velocity on $\Sigma$, we obtain
\begin{equation}\label{Shear0}
    \frac{\p v_1^0}{\p x_2} = { -} \ep^{\delta -\gamma} \frac{F }{ 2    } (\frac{2 x_2}{\ep^\delta} -
1) ; \qquad \frac{\p v_1^0}{\p x_2} |_{\Sigma} = \ep^{\delta -\gamma} \frac{F }{ 2    } .
\end{equation}
 We extend  $\mathbf{v}^0$  to $\O_2$ by setting
$\mathbf{v}^0 =0$ for $-1 \leq x_2 < 0$.
$p^0$ is extended by $0$ to $\O_2$. The question is in which sense this solution approximates the
solution $\{ \ve , p^\ep \}$ of the original problem
(\ref{1.3})-(\ref{1.5}).

A direct consequence of the weak formulation (\ref{1.6}) is that the difference $\ve - \mathbf{v}^0$ satisfies the following variational equation
\begin{gather}
 \int_{\Oe} \ep^\gamma \nabla (\ve - \mathbf{v}^0 ) \nabla \varphi \ dx
 + \int_{\Oe} \bigg( v_1^0 \frac{\p (\ve - \mathbf{v}^0 )}{\p x_1} + (v^\ep_2 - v_2^0 ) \frac{\p \mathbf{v}^0 }{\p x_2}  +
  ((\ve - \mathbf{v}^0 ) \nabla) (\ve - \mathbf{v}^0 ) \bigg) \varphi \, dx \notag \\
 -\int_{\Oe}   p^\ep \mbox{ div } \varphi   = \int_{\Sigma} \ep^\gamma  \frac{\p   v_1^0}{ \p   x_2 }   \varphi_1 \ dS ,  
 \qquad  \forall \varphi \in   { W}^\ep . \label{H4.57}
\end{gather}
It leads to the following  result, which is a generalization of the result
proved in \cite{JM00}:
\begin{proposition} \label{P4.14} Let us assume that ({\bf H1})-({\bf H2}) are satisfied. Let $\{
\ve , p^\ep \}$ be a solution of (\ref{1.3})-(\ref{1.5}) and $\{ \mathbf{v}^0 , p^0 \}$ defined by (\ref{4.41}). Then, it holds for $\ep \leq \ep_0$
\begin{gather}
\sqrt{\ep} \Vert \nabla (\ve - \mathbf{v}^0 ) \Vert_{L^2 (\Oe )^4} +
  \frac{1}{\sqrt{\ep}}\Vert \ve  \Vert_{L^2 (\Oe_2 )^2} 
 +\Vert \ve  \Vert_{L^2 (\Sigma )} + \ep^{1/2 -\delta}\Vert \ve - \mathbf{v}^0 \Vert_{L^2 (\O_1^{\ep , \delta} )^2}
\leq C \ep^{\delta -\gamma +1}
 \label{4.52}\end{gather}
\end{proposition}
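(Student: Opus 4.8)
The plan is to test the variational equation (\ref{H4.57}) with the admissible choice $\varphi=\ve-\mathbf{v}^0\in W^\ep$, which is divergence-free so that the pressure term $\int_{\Oe}p^\ep\,\mbox{div}\,\varphi$ vanishes, and then to absorb, control, or estimate away every term that appears. Writing $\mathbf{w}=\ve-\mathbf{v}^0$, the left-hand side produces the coercive term $\ep^\gamma\|\nabla\mathbf{w}\|_{L^2(\Oe)^4}^2$ together with three convective contributions: the ``transport'' term $\int v_1^0\,\partial_{x_1}\mathbf{w}\cdot\mathbf{w}$, the ``stretching'' term $\int (v_2^\ep-v_2^0)\,\partial_{x_2}\mathbf{v}^0\cdot\mathbf{w}$, and the genuinely nonlinear term $\int ((\mathbf{w}\nabla)\mathbf{w})\cdot\mathbf{w}$. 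The right-hand side is the single boundary integral $\int_\Sigma \ep^\gamma(\partial v_1^0/\partial x_2)\varphi_1\,dS$, which by (\ref{Shear0}) equals $\ep^{2\gamma}\frac{F}{2}\ep^{\delta-\gamma}\int_\Sigma w_1\,dS$ up to the contribution of $v_1^0|_\Sigma=0$, i.e.\ it is of size $\ep^{\delta+\gamma}$ times $\|\mathbf{w}\|_{L^2(\Sigma)}$. One handles this with the trace/Poincar\'e inequality (\ref{Poinc1}) on the porous side (recall $\mathbf{w}=\ve$ on $\Sigma$ since $\mathbf{v}^0|_\Sigma=0$) to bound it by $C\ep^{\delta+\gamma}\ep^{1/2}\|\nabla\mathbf{w}\|_{L^2(\Oe_2)^2}$, then Young's inequality to split off $\tfrac14\ep^\gamma\|\nabla\mathbf{w}\|^2$ and leave a remainder $\le C\ep^{2\delta+\gamma+1}$, which is exactly the square of the claimed rate $C\ep^{\delta-\gamma+1}$ after dividing by $\ep^\gamma$.

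The core of the argument is estimating the three convective terms so that they too are either absorbed into $\tfrac14\ep^\gamma\|\nabla\mathbf{w}\|^2$ or are of lower order. The cubic term $\int((\mathbf{w}\nabla)\mathbf{w})\cdot\mathbf{w}$ vanishes identically because $\mathbf{w}$ is divergence-free and has the right boundary conditions (it lies in $W^\ep$ and is $1$-periodic in $x_1$), so an integration by parts kills it — this is the standard Navier--Stokes cancellation. For the transport term $\int v_1^0\,\partial_{x_1}\mathbf{w}\cdot\mathbf{w}$, since $v_1^0$ depends only on $x_2$ one integrates by parts in $x_1$ and uses $\mbox{div}\,\mathbf{w}=0$ to rewrite it as $-\tfrac12\int (\partial_{x_1}v_1^0)|\mathbf{w}|^2=0$; so this term also vanishes (or is at worst controlled by $\|v_1^0\|_{L^\infty}\|\nabla\mathbf{w}\|\,\|\mathbf{w}\|$ with the $L^\infty$ bound $\|v_1^0\|_\infty\le C\ep^{2\delta-\gamma}$ supported in $\O_1^{\ep,\delta}$, which via (\ref{Poinc4}) gives a harmless factor). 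The stretching term $\int(v_2^\ep-v_2^0)\partial_{x_2}v_1^0\,w_1$ is the delicate one: it lives only in the fracture, $\partial_{x_2}v_1^0$ is of size $\ep^{\delta-\gamma}$ there by (\ref{Shear0}), and one must bound $\int_{\O_1^{\ep,\delta}}|v_2^\ep-v_2^0||w_1|$ by Cauchy--Schwarz and the fracture Poincar\'e inequalities (\ref{Poinc3})--(\ref{Poinc4}), controlling $\|w_2\|_{L^2(\O_1^{\ep,\delta})}$ by $\ep^\delta\|\nabla\mathbf{w}\|$; this yields a contribution $\le C\ep^{\delta-\gamma}\ep^{2\delta}\|\nabla\mathbf{w}\|^2$, which under hypothesis (\textbf{H1}) ($2\gamma<3\delta$, equivalently $3\delta-\gamma>\gamma$, so $\ep^{3\delta-\gamma}=o(\ep^\gamma)$) is absorbable into $\tfrac14\ep^\gamma\|\nabla\mathbf{w}\|^2$ for $\ep$ small.

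Putting these together gives $\tfrac12\ep^\gamma\|\nabla\mathbf{w}\|^2_{L^2(\Oe)^4}\le C\ep^{2\delta+\gamma+1}$, hence $\|\nabla\mathbf{w}\|_{L^2(\Oe)^4}\le C\ep^{\delta+1-\gamma}\cdot\ep^{-1/2}$, i.e.\ $\sqrt\ep\|\nabla\mathbf{w}\|\le C\ep^{\delta-\gamma+1}$. The remaining three quantities on the left of (\ref{4.52}) are then immediate consequences: $\|\ve\|_{L^2(\Sigma)}=\|\mathbf{w}\|_{L^2(\Sigma)}\le C\ep^{1/2}\|\nabla\mathbf{w}\|_{L^2(\Oe_2)^2}\le C\ep^{\delta-\gamma+1}$ by (\ref{Poinc1}); $\tfrac{1}{\sqrt\ep}\|\ve\|_{L^2(\Oe_2)}=\tfrac{1}{\sqrt\ep}\|\mathbf{w}\|_{L^2(\Oe_2)}\le \tfrac{1}{\sqrt\ep}C\ep\|\nabla\mathbf{w}\|\le C\ep^{\delta-\gamma+1}$ by (\ref{Poinc2}) (here one absorbs the $\ep^{1/2}$ so it suffices to have $\sqrt\ep\|\nabla\mathbf{w}\|$ bounded by the target); and $\ep^{1/2-\delta}\|\mathbf{w}\|_{L^2(\O_1^{\ep,\delta})}\le\ep^{1/2-\delta}C\ep^\delta\|\nabla\mathbf{w}\|\le C\ep^{\delta-\gamma+1}$ by (\ref{Poinc4}). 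The only subtlety I would flag is the justification that $\varphi=\mathbf{w}$ is a legitimate test function — i.e.\ that $\ve$ is regular enough and that the convective terms make sense — which is guaranteed by the smoothness of the Navier--Stokes solution quoted after (\ref{1.6}) and by the fact that $\mathbf{v}^0$ is an explicit polynomial; and the main obstacle in practice is the careful bookkeeping showing that \emph{every} non-vanishing error term is dominated by $\ep^\gamma$ or by the target $\ep^{2\delta+\gamma+1}$, which is precisely where hypotheses (\textbf{H1})--(\textbf{H2}) enter.
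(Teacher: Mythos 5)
Your proposal follows essentially the same route as the paper: test (\ref{H4.57}) with $\varphi=\ve-\mathbf{v}^0$, kill the cubic and transport terms by the standard divergence-free/periodicity cancellations, absorb the remaining stretching term $\int (v_1^\ep-v_1^0)(v_2^\ep-v_2^0)\,\partial_{x_2}v_1^0$ via the fracture Poincar\'e inequalities and hypothesis ({\bf H1}) (the $C\ep^{3\delta-\gamma}\Vert\nabla\mathbf{w}\Vert^2$ bound matches the paper's), control the interface shear term by the trace inequality (\ref{Poinc1}), and recover the remaining norms from Lemma \ref{L1}. The only blemish is a bookkeeping slip in the boundary term: $\ep^\gamma\,\partial v_1^0/\partial x_2|_\Sigma=\frac{F}{2}\ep^{\delta}$, not $\ep^{\delta+\gamma}$, so the correct intermediate bound is $\ep^\gamma\Vert\nabla\mathbf{w}\Vert^2\le C\ep^{\delta+1/2}\Vert\nabla\mathbf{w}\Vert$ rather than your $C\ep^{2\delta+\gamma+1}$ — but this self-corrects and your final rate $\Vert\nabla\mathbf{w}\Vert\le C\ep^{\delta+1/2-\gamma}$ is exactly the paper's.
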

\begin{proof} We test (\ref{H4.57}) with $\varphi =\ve - \mathbf{v}^0$ and obtain
\begin{equation}\label{Erreq}
     \int_{\Oe} \ep^\gamma | \nabla (\ve - \mathbf{v}^0 ) |^2 \ dx = - \int_{\Oe} (v^\ep_1 - v_0^1 )  (v^\ep_2 - v_2^0 ) \frac{\p {v}^0_1 }{\p x_2} \ dx { +} \int_{\Sigma} \ep^\gamma \frac{\p   v_1^0}{ \p   x_2 }   (v^\ep_1 - v_2^1 ) \ dS .
\end{equation}
Applying
Lemma \ref{L1} and formula (\ref{Shear0}) yield
\begin{gather*}
    | \int_{\Oe} (v^\ep_1 - v_0^1 )  (v^\ep_2 - v_2^0 ) \frac{\p {v}^0_1 }{\p x_2} \ dx |\leq C \ep^{3\delta - \gamma} \Vert \nabla (\ve - \mathbf{v}^0 )\Vert_{L^2 (\O_1^{\ep , \delta} )^4}^2 , \\
    | \int_{\Sigma} \ep^\gamma  \frac{\p   v_1^0}{ \p   x_2 }   (v^\ep_1 - v_0^1 ) \ dS  |  \leq C \ep^{\delta +1/2}
    \Vert \nabla (\ve - \mathbf{v}^0 )\Vert_{L^2 (\O_2^{\ep } )^4} .
\end{gather*}
Using hypothesis ({\bf H1}) and  above estimates lead to
$$ \int_{\Oe} \ep^\gamma | \nabla (\ve - \mathbf{v}^0 ) |^2 \ dx  \leq C \ep^{\delta +1/2}
    \Vert \nabla (\ve - \mathbf{v}^0 )\Vert_{L^2 (\O_2^{\ep } )^4} .$$
    We apply once more Lemma  \ref{L1} and (\ref{4.52}) follows.
\end{proof}
 This provides the uniform a priori
estimates for $\{ \ve , p^\ep \} $. Moreover, we have found that the viscous flow in $\O_1^{\ep , \delta}$ corresponding to an impermeable wall  is an $O(\ep^{2\delta -\gamma +1/2} )$ $L^2$-approximation
for $\ve$.  The slip law, generalizing Beavers and Joseph's law, should correspond to the next
order velocity correction. Since the Darcy velocity is of order O$(\ep^{\delta -\gamma +3/2} )$, we may
justify Saffman's observation that the { bulk} filtration effects are negligible  at this stage. \vskip2pt

\subsection{Justification of the nonlinear slip law} \label{just}

At the interface $\Sigma$ the approximation from Subsection \ref{impp} leads to
 the shear
stress jump equal to $\displaystyle \ep^\gamma \frac{\partial
v_1^{0}}{\partial x_2} |_{\Sigma} = \frac{F}{2} \ep^\delta$. The shear stress jump requires construction of the corresponding boundary layer.

The natural stretching
variable is given by the geometry and reads $\displaystyle
y=\frac{x}{\ep}$. The correction $\{ \mathbf{w} , p_w \}$ is given by
\begin{gather} -\ep^{\gamma-2} \triangle _y \mathbf{w}  + \ep^{-1} (\mathbf{w}\nabla_y) \mathbf{w}  + \ep^{-1} \nabla_y p_w =0\qquad \hbox{ in
}  \quad  \Omega_1^{\ep , \delta} / \ep  \cup \Omega_2^\ep / \ep ,\label{BJ5.2}\\
\div_y \mathbf{w} =0\qquad \hbox{ in }  \quad \Omega_1 / \ep \cup \Sigma / \ep \cup
\Omega_2^\ep / \ep , \label{BJ5.3} \\
\bigl[ \mathbf{w} \bigr] (\cdot , 0)= 0; \quad \bigl[ p_{w} \bigr] (\cdot , 0)= 0
 \quad \mbox{ and } \quad \bigr[ -\ep^{\gamma-1} \frac{\partial w_1}{\partial y_2}
\bigl] (\cdot , 0) =  \ep^\gamma \frac{\partial v_1^{0}}{\partial x_2}
|_{\Sigma} = \frac{F}{2} \ep^\delta \quad \hbox{ on }  \quad  \Sigma / \ep ,\label{BJ5.4)}\\
\nabla_y \mathbf{w} \in  L^2 (\Omega^\ep /\ep )^4  \quad \mbox{ and} \qquad \{ \mathbf{w} , p_w \} \,
\hbox{ is } 1/\ep - \hbox{periodic in } y_1 .\label{BJ5.5}
\end{gather}
It is natural to rescale $\mathbf{w}$ and $p_w$ by setting
$$ \mathbf{w}=-\ep^{\delta +1-\gamma} \frac{F}{2} \beta (y) \quad \mbox{ and } \quad p_w = -\ep^{\delta} \pi(y) \frac{F}{2}.$$
Using periodicity of the geometry and independence of $\displaystyle
\frac{\partial v_1^{0}}{\partial x_2} |_{\Sigma}$ of $y$, we obtain
\begin{gather} - \triangle _y \beta   +  \nabla_y \pi =\frac{F}{2} \ep^{\delta -2\gamma +2} (\beta\nabla_y ) \beta \qquad \hbox{ in
}  \quad  \Omega_1^{\ep , \delta} / \ep  \cup \Omega_2^\ep / \ep ,\label{BJR5.2}\\
\div_y \beta =0\qquad \hbox{ in }  \quad \Omega_1 / \ep \cup \Sigma / \ep \cup
\Omega_2^\ep / \ep , \label{BJR5.3} \\
\bigl[ \beta \bigr] (\cdot , 0)= 0; \quad \bigl[ \pi \bigr] (\cdot , 0)= 0
 \quad \mbox{ and } \quad \bigr[  \frac{\partial \beta_1}{\partial y_2}
\bigl] (\cdot , 0) =1 \quad \hbox{ on }  \quad  \Sigma / \ep ,\label{BJR5.4)}\\
\nabla_y \beta \in  L^2 (\Omega^\ep /\ep)^4  \quad \mbox{ and} \qquad \{ \beta , \pi \} \,
\hbox{ is } 1/\ep - \hbox{periodic in } y_1 .\label{BJR5.5}
\end{gather}
We do not use directly the nonlinear boundary layer problem (\ref{BJR5.2})-(\ref{BJR5.5}). Since by ({\bf H2}) we have $\delta -2\gamma +2 ${ $>0$}, we approximate $ \{ \beta , \pi \}$ with $ \displaystyle \{ \beta^0 +\frac{F}{2} \ep^{\delta -2\gamma +2} \beta^1  , \pi^0 + \frac{F}{2} \ep^{\delta -2\gamma +2} \pi^1 \}$, where the new functions are given through the following problems
\begin{gather} - \triangle _y \beta^0   +  \nabla_y \pi^0 =0 \qquad \hbox{ in
}  \quad  \Omega_1^{\ep , \delta} / \ep  \cup \Omega_2^\ep / \ep ,\label{BJRR5.2}\\
\div_y \beta^0 =0\qquad \hbox{ in }  \quad \Omega_1 / \ep \cup \Sigma / \ep \cup
\Omega_2^\ep / \ep , \label{BJRR5.3} \\
\bigl[ \beta^0 \bigr] (\cdot , 0)= 0; \quad \bigl[ \pi^0 \bigr] (\cdot , 0)= 0
 \quad \mbox{ and } \quad \bigr[  \frac{\partial \beta_1^0}{\partial y_2}
\bigl] (\cdot , 0) =1 \quad \hbox{ on }  \quad  \Sigma / \ep ,\label{BJRR5.4)}\\
\nabla_y \beta^0 \in  L^2 (\Omega^\ep /\ep)^4  \quad \mbox{ and} \qquad \{ \beta^0 , \pi^0 \} \,
\hbox{ is } 1/\ep - \hbox{periodic in } y_1 \label{BJRR5.5}
\end{gather}
and
\begin{gather} - \triangle _y \beta^1   +  \nabla_y \pi^1 = (\beta^0\nabla_y ) \beta^0 \qquad \hbox{ in
}  \quad  \Omega_1^{\ep , \delta} / \ep \cup \Sigma/ \ep \cup \Omega_2^\ep / \ep ,\label{BJRRR5.2}\\
\div_y \beta^1 =0\qquad \hbox{ in }  \quad \Omega_1 / \ep \cup \Sigma / \ep \cup
\Omega_2^\ep / \ep , \label{BJRRR5.3} \\
\nabla_y \beta^1 \in  L^2 (\Omega^\ep /\ep )^4  \quad \mbox{ and} \qquad \{ \beta^1 , \pi^1 \} \,
\hbox{ is } 1/\ep - \hbox{periodic in } y_1 .\label{BJRRR5.5}
\end{gather}
{ Because of the $1$-periodicity of the geometry with respect to $y_1$,}
problem (\ref{BJRR5.2})-(\ref{BJRR5.5}) is handled using Navier's boundary layer introduced in  \cite{JaMi2}.

{
It reads as follows:}
We introduce the interface $S=(0,1)\times \{ 0\} $, the semi-infinite slab $Z^+ = (0,1) \times (0, +\infty )$
and the semi-infinite porous slab $Z^- =
\displaystyle\cup_{k=1}^\infty ( Y_F -\{ 0,k \} )$. The flow region
is then $Z_{BL} = Z^+ \cup S \cup Z^- $. \vskip3pt { Then the following problem is  considered:}  Find $\{ \beta^{bl} , \omega ^{bl} \} $
with square-integrable gradients satisfying
\begin{gather} -\triangle _y \beta^{bl} +\nabla_y \omega ^{bl} =0\qquad \hbox{ in
} Z^+ \cup Z^- \label{BJ4.2}\\ \div_y  \beta^{bl} =0\qquad \hbox{ in }
Z^+ \cup Z^- \label{4.3} \\ \bigl[ \beta^{bl} \bigr]_S (\cdot , 0)= 0
 \quad \mbox{ and } \quad \bigr[ \{ \nabla_y \beta^{bl}
-\omega^{bl} I \} \mathbf{e}^2 \bigl]_S (\cdot , 0) = \mathbf{e}^1 \ \hbox{ on }
S\label{4.5)} \\ \beta^{bl} =0 \quad \hbox{ on }
\displaystyle\cup_{k=1}^{\infty} ( \p Y_s -\{ 0,k \} ), \qquad \{
\beta^{bl} , \omega^{bl} \} \, \hbox{ is } 1- \hbox{periodic in }
y_1 \label{4.6} \end{gather} By Lax-Milgram's lemma, there is  a
unique $\beta^{bl} \in L^2_{loc} (Z_{BL} )^2, \; \nabla _y \beta^{bl} \in L^2
(Z_{BL})^4$  satisfying (\ref{BJ4.2})-(\ref{4.6}) and
 $\omega^{bl} \in L^2_{loc}
(Z^+ \cup Z^- )$, unique up to a constant and satisfying
(\ref{BJ4.2}).  \vskip0pt After \cite{JaMi2}, \cite{JM00} and \cite{JMN01}, we know
 that system (\ref{BJ4.2})-(\ref{4.6}) describes a
boundary layer, i.e. that $\beta^{bl} $ and $ \omega^{bl} $
stabilize exponentially towards  constants, when $\vert y_2\vert \to
\infty$. \vskip0pt Since we are studying an incompressible flow, it
is useful to recall properties of the conserved averages.
\begin{proposition} \label{P4.3} (\cite{JaMi2}). Let
\begin{equation}\label{4.15}
    C^{ bl}_1  = \int_0^1 \beta^{ bl}_1 (y_1, 0) dy_1 {=-\int_{Z_{BL}} \vert \nabla \beta^{bl} (y) \vert^2 \ dy }.
\end{equation}
\begin{equation}\label{4.16}
  \hbox{Then  for every } \; y_2 \geq 0\; \hbox{ and } \; y_1 \in (0,1), \;  \vert \beta^{bl} (y_1 , y_2 ) - ( C^{bl}_1 , 0) \vert \leq C e^{-\delta y_2}, \quad \mbox{for all } \quad \delta < 2\pi .
\end{equation}
\end{proposition}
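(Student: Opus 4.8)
The plan is to prove the two assertions of Proposition~\ref{P4.3} — the representation (\ref{4.15}) of $C^{bl}_1$ and the exponential stabilization (\ref{4.16}) — in three steps, exploiting the $1$-periodicity in $y_1$, the incompressibility of $\beta^{bl}$, and the spectral gap of the Laplacian on the unit periodicity cell.

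First I would analyze the horizontal slice-averages $\langle\beta^{bl}_i\rangle(y_2)=\int_0^1\beta^{bl}_i(y_1,y_2)\,dy_1$ and identify the limiting constants. Integrating $\div_y\beta^{bl}=0$ over a slice and using $1$-periodicity gives $\frac{d}{dy_2}\langle\beta^{bl}_2\rangle=0$, so the vertical flux is conserved throughout $Z_{BL}$; since the obstacles repeat periodically as $y_2\to-\infty$ and $\beta^{bl}=0$ on every $\partial Y_s$, the field must tend to the only admissible constant $0$ deep in $Z^-$, which forces $\langle\beta^{bl}_2\rangle\equiv 0$. Integrating the first component of the Stokes system (\ref{BJ4.2}) over a slice in $Z^+$ and again using periodicity annihilates the $y_1$-derivatives and yields $\partial_{y_2}^2\langle\beta^{bl}_1\rangle=0$, so $\langle\beta^{bl}_1\rangle$ is affine in $y_2$; the membership $\nabla_y\beta^{bl}\in L^2(Z^+)$ rules out a nonzero slope, whence $\langle\beta^{bl}_1\rangle\equiv\langle\beta^{bl}_1\rangle(0)=C^{bl}_1$. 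This pins down the limiting vector $(C^{bl}_1,0)$ appearing in (\ref{4.16}).

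Second, for the decay I set $\tilde\beta=\beta^{bl}-(C^{bl}_1,0)$, which solves the homogeneous Stokes system in $Z^+$, is $1$-periodic in $y_1$, has vanishing slice-averages by Step~1, and satisfies $\nabla_y\tilde\beta\in L^2(Z^+)$ with $\tilde\beta\to 0$ as $y_2\to+\infty$. Expanding $\tilde\beta$ and $\omega^{bl}$ in Fourier series in $y_1$, the zero-average property removes the $n=0$ mode, and for each $n\neq 0$ the Stokes system reduces to a constant-coefficient ODE in $y_2$ whose harmonic pressure forces the decaying branch to behave like $e^{-2\pi|n|y_2}$, possibly multiplied by $y_2$ from the double characteristic root; the conditions $\nabla_y\tilde\beta\in L^2$ and decay at $+\infty$ select exactly these branches. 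The slowest mode $|n|=1$ gives the rate $2\pi$, and absorbing the polynomial prefactor via $y_2 e^{-2\pi y_2}\le C_\delta e^{-\delta y_2}$ yields (\ref{4.16}) for every $\delta<2\pi$; the passage from $L^2$-in-$y_2$ decay of the Fourier coefficients to the pointwise bound is then a routine interior elliptic-regularity and Sobolev argument on unit slabs. Equivalently, a Saint-Venant energy argument for the tail $E(t)=\int_t^\infty\int_0^1|\nabla_y\tilde\beta|^2$, using the Poincar\'e inequality $\|f\|_{L^2(0,1)}\le\frac{1}{2\pi}\|f'\|_{L^2(0,1)}$ for zero-mean periodic $f$ (whose constant is exactly the inverse spectral gap), reproduces the same sharp rate.

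Finally, for (\ref{4.15}) I would write the weak formulation of (\ref{BJ4.2})--(\ref{4.6}): integrating by parts over $Z^+$ and $Z^-$ separately, the solid-boundary and lateral terms drop out (by $\beta^{bl}=0$ on $\partial Y_s$ and periodicity), while the two traces on $S$ combine through the stress-jump condition $[\{\nabla_y\beta^{bl}-\omega^{bl}I\}\mathbf{e}^2]_S=\mathbf{e}^1$ into a single interface integral, giving $\int_{Z_{BL}}\nabla_y\beta^{bl}:\nabla_y\varphi\,dy=-\int_0^1\varphi_1(y_1,0)\,dy_1$ for every divergence-free admissible $\varphi$. Taking $\varphi=\beta^{bl}$ — admissible because the exponential decay of Step~2 (together with the analogous stabilization of $\beta^{bl}$ towards $0$ in $Z^-$, proved the same way with the cell Poincar\'e constant) makes the far-field boundary terms from a $y_2$-cutoff vanish in the limit — produces $\int_{Z_{BL}}|\nabla_y\beta^{bl}|^2\,dy=-\int_0^1\beta^{bl}_1(y_1,0)\,dy_1=-C^{bl}_1$, which is (\ref{4.15}) with the stated sign (and, consistently, $C^{bl}_1<0$). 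I expect the main obstacle to be Step~2: extracting the \emph{sharp} rate $2\pi$ with the borderline $\delta<2\pi$ — i.e. controlling the double-root polynomial factor — and promoting the modewise $L^2$ decay to a uniform pointwise estimate; by contrast, the averaging in Step~1 and the stress-jump bookkeeping in Step~3 are comparatively routine once the decay is in hand.
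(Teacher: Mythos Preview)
The paper does not give its own proof of Proposition~\ref{P4.3}: the result is quoted from \cite{JaMi2} and stated without argument (the surrounding Corollary~\ref{C4.4} and Proposition~\ref{P4.7} are likewise cited). Your three-step plan --- slice-average analysis to identify the limits, Fourier/Saint-Venant decay in $Z^+$ giving the sharp rate $2\pi$, and the energy identity from testing the weak formulation with $\beta^{bl}$ itself --- is correct and is precisely the standard route taken in \cite{JaMi2}; the only point worth tightening is the justification that $\langle\beta^{bl}_2\rangle\equiv 0$, which is cleanest via flux conservation across $S$ combined with the cellwise Poincar\'e inequality in $Z^-$ forcing the flux through $\{y_2=-k\}$ to vanish as $k\to\infty$.
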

\begin{corollary} \label{C4.4} (\cite{JaMi2}). Let
\begin{equation}\label{4.19}
    C^{bl}_{\omega}  =\int_0^1  \omega^{bl} (y_1 , 0)\, dy_1 .
\end{equation}
\begin{equation}\label{4.20}
 \hbox{Then for every} \quad  y_2 \geq 0\;  \hbox{ and } \; y_1 \in (0,1), \quad \hbox{ we have } \quad   \mid \omega^{bl} (y_1 , y_2 ) - C^{bl}_\omega \mid \leq   e^{-2\pi y_2} .
\end{equation}
\end{corollary}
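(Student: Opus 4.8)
The plan is to exploit that, in the clean upper strip $Z^+ = (0,1)\times(0,+\infty)$, the obstacles of the porous medium are absent, so that $\{\beta^{bl},\omega^{bl}\}$ solves there the homogeneous Stokes system $-\triangle_y\beta^{bl}+\nabla_y\omega^{bl}=0$, $\div_y\beta^{bl}=0$. Taking the divergence of the momentum equation and using incompressibility gives $\triangle_y\omega^{bl}=0$ in $Z^+$. Thus the pressure is a harmonic function on the half-strip which is $1$-periodic in $y_1$; by interior elliptic regularity it is smooth for $y_2>0$. This harmonicity — as opposed to the weaker elliptic system satisfied by the velocity — is what produces the sharp decay rate $2\pi$ and the clean exponential bound, in contrast with the rate $\delta<2\pi$ available for $\beta^{bl}$ in Proposition~\ref{P4.3}.

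First I would expand $\omega^{bl}$ in a Fourier series in the periodic variable, $\omega^{bl}(y_1,y_2)=\sum_{k\in\ZZ}c_k(y_2)\,e^{2\pi i k y_1}$. Harmonicity forces each coefficient to solve $c_k''-4\pi^2 k^2 c_k=0$, so that for $k\neq0$ one has $c_k(y_2)=A_k e^{2\pi|k|y_2}+B_k e^{-2\pi|k|y_2}$, while $c_0(y_2)=a+b\,y_2$. The next step is to discard the unbounded contributions. Here I would use that $\{\beta^{bl},\omega^{bl}\}$ is a genuine boundary layer with $\nabla_y\beta^{bl}\in L^2(Z_{BL})^4$: expanding the Stokes system mode by mode and retaining only the solutions compatible with square-integrability of the gradient as $y_2\to+\infty$ eliminates the growing exponentials, forcing $A_k=0$ for all $k\neq0$ and $b=0$ for the mean mode. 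Consequently $c_k(y_2)=c_k(0)\,e^{-2\pi|k|y_2}$ for $k\neq0$, and $c_0\equiv a$ is constant in $y_2$.

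Since $c_0(y_2)=\int_0^1\omega^{bl}(y_1,y_2)\,dy_1$ is then independent of $y_2$, it equals its interface value, namely $C^{bl}_\omega$ as defined in (\ref{4.19}). Subtracting this constant mode leaves
\[ \omega^{bl}(y_1,y_2)-C^{bl}_\omega=\sum_{k\neq0}c_k(0)\,e^{-2\pi|k|y_2}\,e^{2\pi i k y_1}, \]
whence, using $e^{-2\pi|k|y_2}\leq e^{-2\pi y_2}$ for $|k|\geq1$ and $y_2\geq0$,
\[ |\omega^{bl}(y_1,y_2)-C^{bl}_\omega|\leq\sum_{k\neq0}|c_k(0)|\,e^{-2\pi|k|y_2}\leq e^{-2\pi y_2}\sum_{k\neq0}|c_k(0)|. \]
The slowest-decaying term is $|k|=1$, which furnishes exactly the rate $2\pi$ announced in (\ref{4.20}).

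I expect the main obstacle to be twofold. First, the rigorous elimination of the exponentially growing Fourier modes: this must be tied to the functional setting, namely the square-integrable gradient and the boundary-layer stabilization established in \cite{JaMi2}, rather than assumed; and since only first derivatives of $\beta^{bl}$ are controlled a priori, one couples the mode analysis of $\beta^{bl}$ to that of $\omega^{bl}$ through the momentum equation. Second, pinning down the precise constant $1$ in front of $e^{-2\pi y_2}$ for all $y_2\geq0$ requires controlling $\sum_{k\neq0}|c_k(0)|$, i.e. the interface trace $\omega^{bl}(\cdot,0)-C^{bl}_\omega$, via Parseval together with the normalization fixed by the jump condition $[\,\partial\beta_1^{bl}/\partial y_2\,]_S=1$ in (\ref{4.5)}). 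The exponential rate itself, by contrast, is immediate and sharp, being dictated solely by the lowest nonzero frequency of a harmonic function periodic of period one.
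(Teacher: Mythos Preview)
The paper does not supply its own proof of this corollary: it is quoted verbatim from \cite{JaMi2}, so there is no in-paper argument to compare against. Your approach---taking the divergence to see that $\omega^{bl}$ is harmonic in $Z^+$, Fourier-expanding in the periodic variable $y_1$, solving the resulting ODEs $c_k''=4\pi^2k^2c_k$, and discarding the growing modes via the boundary-layer condition $\nabla_y\beta^{bl}\in L^2$---is precisely the separation-of-variables method used in \cite{JaMi2} to establish such exponential stabilization results, so in that sense your route coincides with the original.

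Your identification of the two delicate points is accurate. The elimination of growing modes is handled in \cite{JaMi2} by coupling the Fourier analysis of velocity and pressure through the Stokes system and invoking the energy bound. The sharp constant $1$ in front of $e^{-2\pi y_2}$, however, is a different matter: your estimate yields the prefactor $\sum_{k\neq0}|c_k(0)|$, and there is no mechanism in the jump normalization (\ref{4.5)}) alone that forces this sum to be at most $1$ uniformly over all admissible pore geometries $Y_s$. In \cite{JaMi2} the stated decay is of the form $Ce^{-2\pi y_2}$ with a geometry-dependent constant $C$; the appearance of the bare constant $1$ in (\ref{4.20}) should be read as a (harmless) transcription shortcut in the present paper rather than a claim you are expected to reproduce. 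So your argument is correct for the decay rate and for a bound $C e^{-2\pi y_2}$; do not spend effort trying to pin the constant to exactly $1$.
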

\begin{proposition} \label{P4.7} (\cite{JaMi2}). Let   $\beta^{bl} $ and $\omega^{bl} $
be defined by (\ref{BJ4.2})-(\ref{4.6}). Then  there exist  positive constants
$C$ and $\gamma_0$,  such that
\begin{equation}\label{4.32}
| \nabla \beta^{bl} (y_1 , y_2 ) | +
 | \nabla \omega^{bl} (y_1 , y_2 ) |  \leq C e^{-\gamma_0 | y_2|} , \qquad \hbox{for every } \quad (y_1 , y_2 ) \in Z^- .
\end{equation}\end{proposition}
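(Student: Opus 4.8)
The plan is to prove the pointwise bound (\ref{4.32}) by a Saint--Venant (energy--decay) argument carried out directly in the porous slab $Z^-$, followed by a local elliptic--regularity upgrade from $L^2$--per--row bounds to pointwise bounds. The decisive structural feature is that, since $\beta^{bl}$ vanishes on every obstacle boundary $\p Y_s-\{0,k\}$, the cell Poincar\'e inequality holds \emph{with limit zero}: for each fluid cell one has $\Vert\beta^{bl}\Vert_{L^2(Y_F-\{0,k\})}\leq C\Vert\nabla_y\beta^{bl}\Vert_{L^2(Y_F-\{0,k\})}$, with $C$ independent of $k$ by $y_1$--periodicity. This is exactly what distinguishes $Z^-$ from $Z^+$, where (by Proposition \ref{P4.3}) the velocity stabilizes to the \emph{nonzero} constant $(C^{bl}_1,0)$ and no such inequality is available.

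First I would introduce the row decomposition. For integers $k\geq1$ let $R_k=Z^-\cap\{-k<y_2<-k+1\}$ be the $k$-th row of fluid cells, and set the tail energy $E_k=\int_{Z^-\cap\{y_2<-k+1\}}|\nabla_y\beta^{bl}|^2\,dy$, so that $E_{k-1}-E_k=\int_{R_k}|\nabla_y\beta^{bl}|^2\,dy$. Let $\chi_k$ be the Lipschitz cutoff equal to $1$ for $y_2\leq-k$, equal to $0$ for $y_2\geq-k+1$, and affine in between, so that $\p_{y_2}\chi_k=-1$ on $R_k$ and $\chi_k\beta^{bl}$ is supported in $\{y_2<-k+1\}\subset Z^-$, away from the interface $S$. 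Testing the homogeneous Stokes system (\ref{BJ4.2})--(\ref{4.6}) in $Z^-$ with $\varphi=\chi_k\beta^{bl}$ is admissible: $\varphi$ vanishes on the obstacles, is $1$-periodic in $y_1$, and decays as $y_2\to-\infty$, so every boundary term drops. One integration by parts, together with $\div_y\beta^{bl}=0$, gives the identity
\[
\int_{Z^-}\chi_k\,|\nabla_y\beta^{bl}|^2\,dy=-\int_{R_k}\p_{y_2}\chi_k\,\beta^{bl}_i\,\p_{y_2}\beta^{bl}_i\,dy+\int_{R_k}\omega^{bl}\,\beta^{bl}_2\,\p_{y_2}\chi_k\,dy,
\]
whose left-hand side dominates $E_k$ and whose right-hand side lives in the single row $R_k$.

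The core step is to bound the right-hand side by $C\int_{R_k}|\nabla_y\beta^{bl}|^2$. The first term is controlled by $C\Vert\nabla_y\beta^{bl}\Vert_{L^2(R_k)}\Vert\beta^{bl}\Vert_{L^2(R_k)}\leq C\Vert\nabla_y\beta^{bl}\Vert_{L^2(R_k)}^2$ via the zero-limit Poincar\'e inequality. For the pressure term I replace $\omega^{bl}$ by $\omega^{bl}-\bar\omega_k$, where $\bar\omega_k$ is its mean over the fluid part of $R_k$; this is legitimate because $\int_{R_k}\beta^{bl}_2\,\p_{y_2}\chi_k\,dy=\int_{Z^-}\div_y(\chi_k\beta^{bl})\,dy=0$, and then the inf--sup estimate on the perforated periodic cell (constructed via the Bogovskii operator, with constant uniform in $k$) yields $\Vert\omega^{bl}-\bar\omega_k\Vert_{L^2(R_k)}\leq C\Vert\nabla_y\beta^{bl}\Vert_{L^2(R_k)}$. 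Combining, $E_k\leq C(E_{k-1}-E_k)$, i.e. $E_k\leq\theta\,E_{k-1}$ with $\theta=C/(1+C)<1$; hence $E_k\leq\theta^kE_0$ and $\int_{R_k}|\nabla_y\beta^{bl}|^2\,dy\leq\theta^{k-1}E_0$. This is exponential $L^2$--decay of the gradient energy, row by row.

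Finally I would upgrade this to (\ref{4.32}). Since $\{\beta^{bl},\omega^{bl}\}$ solves the homogeneous stationary Stokes system in the fluid part of each row, with smooth obstacle boundary $\p Y_s$ and periodic lateral conditions, interior-and-boundary regularity for the Stokes operator gives, with $C$ uniform in $k$ by periodicity,
\[
\Vert\nabla_y\beta^{bl}\Vert_{L^\infty(R_k)}+\Vert\nabla_y\omega^{bl}\Vert_{L^\infty(R_k)}\leq C\left(\Vert\nabla_y\beta^{bl}\Vert_{L^2(R_{k-1}\cup R_k\cup R_{k+1})}+\Vert\omega^{bl}-\bar\omega_k\Vert_{L^2(R_{k-1}\cup R_k\cup R_{k+1})}\right)\leq C\,\theta^{(k-2)/2}E_0^{1/2},
\]
using the inf--sup bound once more for the pressure norm. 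Choosing $\gamma_0$ with $e^{-\gamma_0}=\theta^{1/2}$ and noting $|y_2|<k$ on $R_k$ (so $e^{-\gamma_0 k}\leq e^{-\gamma_0|y_2|}$) absorbs the prefactors and yields (\ref{4.32}). The main obstacle I anticipate is the pressure term: the whole argument hinges on eliminating $\omega^{bl}$ in favour of $\nabla_y\beta^{bl}$ through the uniform inf--sup constant of the perforated cell, and on the zero-limit Poincar\'e inequality valid in $Z^-$ precisely because of the no-slip condition on the obstacles. Without that, neither the velocity nor the pressure term could be closed and the iteration would fail --- as indeed it must in $Z^+$, where the nontrivial constant mode $(C^{bl}_1,0)$ is present.
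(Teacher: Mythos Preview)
Your Saint--Venant energy--decay argument is correct and is the standard route to exponential decay of Stokes boundary layers in the perforated half-slab $Z^-$. The ingredients you single out---the zero-limit cell Poincar\'e inequality available in $Z^-$ thanks to the no-slip condition on $\p Y_s$, the cancellation $\int_{R_k}\beta^{bl}_2\,\p_{y_2}\chi_k\,dy=0$ that lets you subtract the pressure mean, and the uniform Bogovskii/inf--sup constant on the periodic perforated cell---are exactly what closes the iteration $E_k\leq\theta E_{k-1}$, and the local Stokes regularity upgrade from row-wise $L^2$ bounds to pointwise bounds is routine.

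Note, however, that the paper does not supply its own proof of this proposition: as the citation in the statement indicates, the result is quoted from \cite{JaMi2} (J\"ager--Mikeli\'c, \emph{Ann.\ Sc.\ Norm.\ Super.\ Pisa} 1996). There is therefore nothing in the present paper to compare your argument against; what you have written is, up to cosmetic variants, the proof one finds in the cited reference and the subsequent boundary-layer literature. One small technical remark: for the pressure term it is marginally cleaner to bypass the estimate on $\Vert\omega^{bl}-\bar\omega_k\Vert_{L^2(R_k)}$ altogether and instead solve $\div_y\psi=\beta^{bl}_2\,\p_{y_2}\chi_k$ in $H^1_0(R_k)^2$ via Bogovskii (the right-hand side has zero mean), then integrate by parts to obtain $\int_{R_k}\omega^{bl}\beta^{bl}_2\,\p_{y_2}\chi_k\,dy=\int_{R_k}\nabla_y\beta^{bl}:\nabla_y\psi\,dy\leq C\Vert\nabla_y\beta^{bl}\Vert_{L^2(R_k)}^2$. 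This sidesteps any ambiguity about which boundary conditions underlie the Ne\v{c}as inequality on a single row with free horizontal faces.
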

  $\displaystyle \beta^{ bl } ( \frac{x}{\ep} )$ is extended by
zero to  $\O_2 \setminus \Oe $. Let $H$ be Heaviside's function. Then
for every $q\geq 1$ we have
\begin{equation}\label{4.60}
\Vert \beta^{ bl ,\ep} - \varepsilon (C^{bl}_1 , 0) H(x_2) \Vert
_{L^q(\O_2 \cup \O_1^{\ep , \delta})^2}
+ \Vert \omega^{bl,\ep} -C^{bl}_\omega H(x_2) \Vert _{L^q(\Oe)} +
\ep \Vert \nabla \beta^{ bl , \ep } \Vert _{L^q(\O_2 \cup \O_1^{\ep , \delta}
)^4} =C \ep^{1/q} .
\end{equation}
Hence, our correction is not concentrated around the interface and
there are some { nonzero} stabilization constants. We will see that these
constants are closely linked with our effective interface law.
\vskip1pt
  As in \cite{JaMi2} stabilization of $\beta^{0 , \ep }$
towards a nonzero constant velocity $  C^{bl}_1 \mathbf{e}^1 $,
at the upper boundary,  generates a counterflow. It is given by the
two dimensional Couette flow $\mathbf{d}= C^{bl}_1 {\displaystyle
\frac{x_2 }{ \ep^\delta}}  \mathbf{e}^1 $.\vskip1pt

 \vskip1pt

Now,  after \cite{JaMi2}, we  expected that the approximation for the velocity reads
\begin{gather}
\mathbf{v} (\ep ) = \mathbf{v}^0  -  \frac{F}{2} \ep^{\delta +1 -\gamma} \beta^{ bl} (\frac{x }{ \ep})  + \frac{F}{2} \ep^{\delta +1 -\gamma}\mathbf{d} = \notag \\
-{\ep^{2\delta -\gamma}  \frac{F }{ 2    }} \frac{x_2^+}{\ep^\delta} (\frac{x_2}{\ep^\delta} -
1) \mathbf{e}^1 -  \frac{F}{2} \ep^{\delta +1 -\gamma} \beta^{ bl} (\frac{x }{ \ep})  + \frac{F}{2} \ep^{\delta +1 -\gamma} C^{bl}_1 {
\frac{x_2^+ }{ \ep^\delta}}  \mathbf{e}^1
   . \label{4.66}
\end{gather}
\vskip2pt  Concerning the pressure,  there are additional complications due to the
stabilization of the boundary layer pressure to $C^{bl}_\omega $,
when $y_2 \to +\infty$. Consequently,
 $\displaystyle {\omega}^{ bl , \ep} - H(x_2) C^{bl}_\omega  \frac{\p   v^0_1 }{ \p    x_2 } |_\Sigma $ is small
in $\Omega_1^{\ep , \delta}$ and we should take into account the pressure stabilization effect.

At the flat
interface $\Sigma$, the normal component of the normal stress
reduces to the pressure field. Subtraction of the stabilization
pressure constant at infinity leads to the pressure jump on
$\Sigma$ and the pressure approximation is
\begin{equation}\label{BJ47}
p (\ep)  = - \frac{F}{2} \ep^\delta \bigl( {\omega}^{ bl} (\frac{x}{\ep}) -
C^{bl}_\omega \bigr)   .
\end{equation}
For the rigorous justification of the pressure approximation, leading to the pressure jump law, we refer to \cite{AMCAM2011} . Numerical experiments, justifying independently the { pressure jump} are in \cite{CGMCM:13}.
\vskip3pt
We now make the velocity calculations  rigorous. Let us define the errors in
velocity and in the pressure:
\begin{gather}
{\cal U}^\ep (x) =  \ve -  \mathbf{v} (\ep), \qquad 
{\cal P}^\ep (x) = p^\ep - p (\ep) . \label{BJ4.67}
\end{gather}
 \begin{remark} Rigorous argument, showing that ${\cal U}^\ep$ is of order $O(\ep^{2-\gamma})$, allows  justifying  Saffman's
modification  of the Beavers and Joseph law (see \cite{BJ} and \cite{SAF}):
On the interface
$\Sigma$ we obtain
$$\frac{\p v_1 (\ep) }{ \p x_2 } |_\Sigma = - \ep^{\delta -\gamma} \frac{F}{2}(\frac{2x_2}{ \ep^\delta } -1)
|_\Sigma - \ep^{\delta -\gamma} \frac{F}{2} \frac{\p \beta^{bl}_1 }{ \p y_2} |_{\Sigma , y=x/\ep}  +\ep^{1 -\gamma} \frac{F}{2} C^{bl}_1  \; \hbox{ and } \; \frac{v_1 (\ep)}{ \ep } = -
\beta^{bl}_1 ( x_1 / \ep , 0)  \ep^{\delta -\gamma} \frac{F}{2}. $$ After averaging over $\Sigma$ with respect to $y_1$, we
obtain the 
Saffman version of the law by
Beavers and Joseph
  \begin{equation}\label{BJ}
u^{eff}_1 = -\ep C^{bl}_1 \frac{\p u^{eff}_{1} }{ \p x_2}  + O(\ep^{2-\gamma}) \quad
\hbox{ on } \quad \Sigma ,    \end{equation}
 where $u^{eff}_1$ is the average of { $v_1 ({\ep})$}
over the characteristic pore opening at the naturally permeable wall. The
higher order terms are neglected. Nevertheless, for { $\gamma $ close to $1$} the Beavers and Joseph slip law isn't satisfactory any more. \end{remark}

Next, the variational equation for $\{ {\cal U}^\ep , {\cal P}^\ep \} $
   reads
\begin{gather}
 \int_{\Oe} \ep^\gamma \nabla  {\cal U}^\ep :  \nabla \varphi \ dx  +\int_{\Oe}  \bigg( ( {\cal U}^\ep \nabla ) {\cal U}^\ep + ({\cal U}^\ep \nabla ) \mathbf{v} (\ep) + (\mathbf{v} (\ep)\nabla )  {\cal U}^\ep \bigg)\varphi \ dx \notag \\
  - \int_{\Oe} {\cal U}^\ep \mbox{ div } \varphi \ dx = - \int_{\Oe} (\mathbf{v} (\ep) \nabla ) \mathbf{v} (\ep) \varphi \ dx  -\int_{\Sigma} \ep \varphi_1    \frac{F}{2}  C^{bl}_1   \ dS , \; \forall \varphi \in  { W}^\ep .
\label{zsigma}
\end{gather}
Note that ${\cal U}^\ep$ is  divergence free  and the approximation satisfies the outer boundary conditions.
In analogy with Proposition 4, pages 1120-1121, from \cite{JM00} we have
\begin{theorem} \label{T4.17}Let us suppose the hypotheses ({\bf H1})-({\bf H2}) and  let ${\cal U}^\ep$   and
${\cal P}^\ep$ be defined by (\ref{BJ4.67}).
 Then, the following
estimates hold
\begin{gather}
\ep \Vert \nabla {\cal U}^\ep  \Vert_{L^2 (\Oe )^4} + \Vert  {\cal U}^\ep  \Vert_{L^2 (\Oe_2 )^2} +
\ep^{1/2} \Vert  {\cal U}^\ep  \Vert_{L^2 (\Sigma  )^2} +\ep^{1-\delta} \Vert  {\cal U}^\ep  \Vert_{L^2 (\O_1^{\ep, \delta} )^2}
 \leq C\ep^{5/2-\gamma}
 \label{4.84} 
 \end{gather}
\end{theorem}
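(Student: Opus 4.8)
The plan is to run the standard energy estimate on the error equation (\ref{zsigma}), in the spirit of Proposition~4 of \cite{JM00}, now keeping track of the quadratic (convective) contributions. Up to an exponentially small modification of $\mathbf{v}(\ep)$ near $\{x_2=\ep^\delta\}$ that restores the outer boundary condition there exactly (legitimate because, by Proposition~\ref{P4.3}, $\beta^{bl}$ stabilizes exponentially), the error ${\cal U}^\ep$ is divergence free, vanishes on $S^\ep\cup\{x_2=-1\}$, is $1$-periodic in $x_1$, and is exponentially small on $\{x_2=\ep^\delta\}$. Testing (\ref{zsigma}) with $\varphi={\cal U}^\ep$ makes the pressure term drop, and since $\mathbf{v}(\ep)$ is also divergence free, integration by parts together with the boundary behaviour above gives $\int_{\Oe}({\cal U}^\ep\nabla){\cal U}^\ep\cdot{\cal U}^\ep\,dx=\int_{\Oe}(\mathbf{v}(\ep)\nabla){\cal U}^\ep\cdot{\cal U}^\ep\,dx=O(e^{-c/\ep})$. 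Hence
\begin{equation*}
\ep^\gamma\Vert\nabla{\cal U}^\ep\Vert^2_{L^2(\Oe)}=-\int_{\Oe}({\cal U}^\ep\nabla)\mathbf{v}(\ep)\cdot{\cal U}^\ep\,dx-\int_{\Oe}(\mathbf{v}(\ep)\nabla)\mathbf{v}(\ep)\cdot{\cal U}^\ep\,dx-\frac{F}{2}C^{bl}_1\,\ep\int_{\Sigma}{\cal U}^\ep_1\,dS+O(e^{-c/\ep}),
\end{equation*}
and the task is to bound the right-hand side by an absorbable multiple of $\Vert\nabla{\cal U}^\ep\Vert^2_{L^2(\Oe)}$ plus $C\ep^{3/2}\Vert\nabla{\cal U}^\ep\Vert_{L^2(\Oe)}$.

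For the first integral I split $\nabla\mathbf{v}(\ep)$ into its Poiseuille, Couette and boundary-layer parts. These are bounded pointwise by $C\ep^{\delta-\gamma}$, $C\ep^{1-\gamma}$ and $C\ep^{\delta-\gamma}|(\nabla_y\beta^{bl})(x/\ep)|\le C\ep^{\delta-\gamma}$ respectively (the last uses the smoothness of $\p Y_s$, which makes $\nabla_y\beta^{bl}$ bounded). Two applications of the Poincar\'e inequalities of Lemma~\ref{L1} --- (\ref{Poinc4}) in the fracture, (\ref{Poinc2}) in the porous part --- then bound this integral by $C(\ep^{3\delta-\gamma}+\ep^{1+2\delta-\gamma}+\ep^{\delta+2-\gamma})\Vert\nabla{\cal U}^\ep\Vert^2_{L^2(\Oe)}$. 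By ({\bf H1}) and $\delta<1$ one has $2\gamma<3\delta<2\delta+1\le\delta+2$, so every exponent exceeds $\gamma$ and these terms are absorbed into the left-hand side.

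The consistency term $\int_{\Oe}(\mathbf{v}(\ep)\nabla)\mathbf{v}(\ep)\cdot{\cal U}^\ep\,dx$ is the crux. Writing $\mathbf{v}(\ep)=\mathbf{v}^0+\tilde{\mathbf{v}}$ with $\tilde{\mathbf{v}}=-\frac{F}{2}\ep^{\delta+1-\gamma}\bigl(\beta^{bl}(x/\ep)-C^{bl}_1\frac{x_2^+}{\ep^\delta}\mathbf{e}^1\bigr)$, one has $(\mathbf{v}^0\nabla)\mathbf{v}^0=0$ (the Poiseuille profile depends only on $x_2$ and has no vertical component), the Stokes boundary-layer system (\ref{BJ4.2}) annihilates the viscous--pressure residual of $\tilde{\mathbf{v}}$ exactly, and the counterflow is precisely what produced the interface term $\ep\int_{\Sigma}{\cal U}^\ep_1\,dS$ already present in (\ref{zsigma}); so only $(\mathbf{v}^0\nabla)\tilde{\mathbf{v}}$, $(\tilde{\mathbf{v}}\nabla)\mathbf{v}^0$ (both supported in $\O_1^{\ep,\delta}$) and $(\tilde{\mathbf{v}}\nabla)\tilde{\mathbf{v}}$ remain. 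For the first two, the decisive point is that $v_1^0$ vanishes at $\Sigma$, so $|\mathbf{v}^0(x)|\le C\ep^{\delta-\gamma}x_2$, while $\beta^{bl}$ and $\nabla_y\beta^{bl}$ decay like $e^{-\gamma_0|y_2|}$; a direct computation then yields $\Vert(\mathbf{v}^0\nabla)\tilde{\mathbf{v}}\Vert_{L^2(\O_1^{\ep,\delta})}+\Vert(\tilde{\mathbf{v}}\nabla)\mathbf{v}^0\Vert_{L^2(\O_1^{\ep,\delta})}\le C\ep^{2\delta+3/2-2\gamma}$, which (\ref{Poinc4}) converts into the bound $C\ep^{3\delta+3/2-2\gamma}\Vert\nabla{\cal U}^\ep\Vert_{L^2(\Oe)}$. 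For $(\tilde{\mathbf{v}}\nabla)\tilde{\mathbf{v}}$ one uses $\Vert\beta^{bl}(x/\ep)-C^{bl}_1\frac{x_2^+}{\ep^\delta}\mathbf{e}^1\Vert_{L^\infty}\le C$ (which needs the exponential stabilization of $\beta^{bl}$) and $\Vert\nabla_x[\beta^{bl}(x/\ep)]\Vert_{L^2}\le C\ep^{-1/2}$ from (\ref{4.60}), obtaining the same fracture bound and, in the porous part where (\ref{Poinc2}) gives the smaller Poincar\'e constant $\ep$, the weaker $C\ep^{2\delta+5/2-2\gamma}\Vert\nabla{\cal U}^\ep\Vert_{L^2(\Oe)}$. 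Since $2\gamma<3\delta$ and $\delta<1$, all of these exponents are strictly greater than $3/2$, so this whole term is negligible compared with $\ep^{3/2}\Vert\nabla{\cal U}^\ep\Vert_{L^2(\Oe)}$.

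Finally, $\bigl|\frac{F}{2}C^{bl}_1\ep\int_{\Sigma}{\cal U}^\ep_1\,dS\bigr|\le C\ep\Vert{\cal U}^\ep\Vert_{L^2(\Sigma)}\le C\ep^{3/2}\Vert\nabla{\cal U}^\ep\Vert_{L^2(\Oe_2)}$ by the trace bound (\ref{Poinc1}) (valid since ${\cal U}^\ep=0$ on $S^\ep$), and this is the dominant error. Collecting the estimates, absorbing the quadratic contributions by ({\bf H1}), and dividing through by $\ep^\gamma\Vert\nabla{\cal U}^\ep\Vert_{L^2(\Oe)}$ yields $\Vert\nabla{\cal U}^\ep\Vert_{L^2(\Oe)}\le C\ep^{3/2-\gamma}$, i.e. $\ep\Vert\nabla{\cal U}^\ep\Vert_{L^2(\Oe)}\le C\ep^{5/2-\gamma}$; the remaining three norms in (\ref{4.84}) then follow at once from Lemma~\ref{L1}. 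I expect the consistency estimate to be the true obstacle: a naive global $L^2$ bound on, say, $(\mathbf{v}^0\nabla)\tilde{\mathbf{v}}$ only produces the exponent $4\delta+\tfrac12-2\gamma$, which is \emph{not} $\ge 3/2$ under ({\bf H1}) alone (and ({\bf H3}) points the wrong way), so one really must exploit both the vanishing of the Poiseuille flow at $\Sigma$ and the exponential stabilization of $\beta^{bl}$ to recover the missing power $\ep^{1-\delta}$.
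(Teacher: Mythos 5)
Your proof is correct and follows essentially the same route as the paper: an energy estimate on (\ref{zsigma}), absorption of the inertia terms containing ${\cal U}^\ep$ into the viscous term via Lemma \ref{L1} and ({\bf H1}), a refined bound of order $\ep^{3\delta+3/2-2\gamma}\Vert\nabla{\cal U}^\ep\Vert_{L^2(\Oe)^4}$ on the consistency term $\int_{\Oe}(\mathbf{v}(\ep)\nabla)\mathbf{v}(\ep)\cdot{\cal U}^\ep\,dx$ exploiting the exponential decay of the boundary layer and the vanishing of the Poiseuille profile at $\Sigma$, and identification of the counterflow interface term as the dominant $O(\ep^{3/2})\Vert\nabla{\cal U}^\ep\Vert_{L^2(\Oe)^4}$ contribution, exactly as in the paper's (\ref{Inert11})--(\ref{Inert22}). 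Your closing remark that a naive global bound would need the reverse of ({\bf H3}) correctly isolates the same refinement the paper invokes when it notes that the $x_2$-factors behave like $x_2\ep^{-\delta}$ near $\Sigma$.
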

\begin{proof}
We test (\ref{zsigma}) by ${\cal U}^\ep$. Since  div ${\cal U}^\ep=0$, ${\cal P}^\ep$ is eliminated from the equality. Next, arguing as in the proof of Proposition \ref{P4.14}, we see that under assumptions ({\bf H1})-({\bf H2}) the viscous terms controls the inertia terms.
Therefore, it remains to estimate the forcing term and the interface term, coming from the counterflow.
We have
\begin{gather*}
    (\mathbf{v} (\ep) \nabla ) \mathbf{v} (\ep)   = - \frac{F}{2} \ep^{\delta +1 -\gamma} \Bigg( \bigg( -{\ep^{2\delta -\gamma}  \frac{F }{ 2    }} \frac{x_2^+}{\ep^\delta} (\frac{x_2}{\ep^\delta} -
1)  + \frac{F}{2} \ep^{\delta +1 -\gamma} C^{bl}_1 {
\frac{x_2^+ }{ \ep^\delta}}
- \frac{F}{2} \ep^{\delta +1 -\gamma} \beta^{bl}_1 (\frac{x}{\ep}) \bigg) \frac{\p \beta^{bl} (\frac{x}{\ep})}{\p x_1} \\
 - \frac{F}{2} \ep^{\delta +1 -\gamma} \beta^{bl}_2 (\frac{x}{\ep}) \frac{\p \beta^{bl} (\frac{x}{\ep})}{\p x_2} + \beta^{bl}_2 (\frac{x}{\ep}) \mathbf{e}^1 \frac{\p }{\p x_2} \big( -{\ep^{2\delta -\gamma}  \frac{F }{ 2    }} \frac{x_2^+}{\ep^\delta} (\frac{x_2}{\ep^\delta} -
1)  + \frac{F}{2} \ep^{\delta +1 -\gamma} C^{bl}_1 {
\frac{x_2^+ }{ \ep^\delta}}  \big)
\Bigg).
\end{gather*}
Since $\nabla_y \beta^{bl}$ decays exponentially in $y_2$ and the functions of $x_2$ behave as $x_2 \ep^{-\delta}$ for small $x_2$, { we obtain
\begin{gather}
    | \int_{\Oe}  \ep^{3\delta +1 -2\gamma} \frac{x_2^+}{\ep^\delta} (\frac{x_2}{\ep^\delta} -
1) \frac{\p \beta^{bl} (\frac{x}{\ep})}{\p x_1} {\cal U}^\ep \ dx | = | \int_{\Oe}  \ep^{3\delta +1 -2\gamma} \frac{x_2^+}{\ep^\delta} (\frac{x_2}{\ep^\delta} -
1) \frac{\p {\cal U}^\ep}{\p x_1}  \beta^{bl} (\frac{x}{\ep}) \ dx | \notag \\
 \leq C\ep^{2\delta -2\gamma + 5/2} || \nabla {\cal U}^\ep ||_{L^2 (\Oe)^4} \label{InertFirst}
\end{gather}
}
and the leading part in the first two terms of $(\mathbf{v} (\ep) \nabla ) \mathbf{v} (\ep)$ is
\begin{gather*}
   \frac{F^2}{4}   \ep^{2\delta +2 -2\gamma} (\beta^{bl} (\frac{x}{\ep})\nabla_x )\beta^{bl} (\frac{x}{\ep}).
\end{gather*}
Similarly, after integration by parts { in $\Omega_1^{\ep, \delta}$ and using that $\beta^{bl}$ is divergence free, we obtain the same order of $\ep$ estimate as (\ref{InertFirst}) for}
\begin{gather*}
    | \int_{\Oe} \beta^{bl}_2 (\frac{x}{\ep}) \mathbf{e}^1 \frac{\p }{\p x_2} \big( -{\ep^{2\delta -\gamma}  \frac{F }{ 2    }} \frac{x_2}{\ep^\delta} (\frac{x_2}{\ep^\delta} -
1)  + \frac{F}{2} \ep^{\delta +1 -\gamma} C^{bl}_1 {
\frac{x_2 }{ \ep^\delta}}  \big) \varphi \ dx |.
\end{gather*}
Consequently, it results in
\begin{gather}
    | \int_{\Oe} (\mathbf{v} (\ep) \nabla ) \mathbf{v} (\ep)  {\cal U}^\ep \ dx | \leq C\ep^{3\delta -2\gamma + 3/2} || \nabla {\cal U}^\ep ||_{L^2 (\Oe)^4} \label{Inert11}\\
    | \int_{\Sigma} \ep {\cal U}^\ep_1    \frac{F}{2}  C^{bl}_1   \ dS | \leq C \ep^{3/2}  || \nabla {\cal U}^\ep ||_{L^2 (\Oe)^4}.\label{Inert22}
\end{gather}
Applying Lemma \ref{L1} yields the estimate (\ref{4.84}).
\end{proof}
Still the { shear jump at the interface} dominates inertia due to the counterflow.
{ Correcting the shear jump term $\displaystyle -\int_{\Sigma} \ep \varphi_1    \frac{F}{2}  C^{bl}_1   \ dS$ is as above. The only difference is that instead of $\ep^\delta$ we have $\ep$ and $F/2$ is replaced by $-F C^{bl}_1 /2$.}
We eliminate it by  modifying slightly the velocity and pressure corrections:
\begin{corollary}
Let assumptions ({\bf H1})-({\bf H3}) hold,  and ${\cal U}^\ep$,
${\cal P}^\ep$ be defined by (\ref{BJ4.67}). Let
\begin{gather}
    {\cal U}^{1,\ep } =  {\cal U}^\ep  - \frac{F}{2} C^{bl}_1 \ep^{2 -\gamma} \beta^{ bl} (\frac{x }{ \ep})  + \frac{F}{2} \ep^{2 -\gamma} (C^{bl}_1)^2 {
\frac{x_2^+ }{ \ep^\delta}}  \mathbf{e}^1 , \label{corr1}
\end{gather}
 Then, the following
estimate holds
\begin{gather}
\ep \Vert \nabla {\cal U}^{1,\ep }   \Vert_{L^2 (\Oe )^4} + \Vert  {\cal U}^{1,\ep }  \Vert_{L^2 (\Oe_2 )^2} +
\ep^{1/2} \Vert  {\cal U}^{1,\ep }  \Vert_{L^2 (\Sigma  )^2} +\ep^{1-\delta} \Vert  {\cal U}^{1,\ep }  \Vert_{L^2 (\O_1^{\ep, \delta} )^2}
 \leq C\ep^{5/2+ 3\delta -3\gamma}.
 \label{4.85}
 \end{gather}
\end{corollary}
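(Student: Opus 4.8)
\noindent The proof will follow the scheme of Proposition~\ref{P4.14} and Theorem~\ref{T4.17}. One writes the weak formulation satisfied by $\{{\cal U}^{1,\ep},{\cal P}^{1,\ep}\}$, obtained from \eqref{zsigma} by substituting ${\cal U}^\ep={\cal U}^{1,\ep}-\tfrac F2 C^{bl}_1\ep^{2-\gamma}\beta^{bl}(x/\ep)+\tfrac F2\ep^{2-\gamma}(C^{bl}_1)^2 x_2^+/\ep^\delta\,\mathbf{e}^1$ and moving the resulting extra terms to the right-hand side; one then tests with ${\cal U}^{1,\ep}$, which is still divergence free, so ${\cal P}^{1,\ep}$ drops out, and one uses hypotheses ({\bf H1})--({\bf H2}) to absorb the linearized inertia $({\cal U}^{1,\ep}\nabla)\mathbf{v}(\ep)+(\mathbf{v}(\ep)\nabla){\cal U}^{1,\ep}$ into the viscous dissipation $\ep^\gamma\Vert\nabla{\cal U}^{1,\ep}\Vert_{L^2(\Oe)^4}^2$ exactly as before. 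The purpose of the two extra pieces in \eqref{corr1} is the following: the $O(\ep^{2-\gamma})$ boundary layer $-\tfrac F2 C^{bl}_1\ep^{2-\gamma}\beta^{bl}(x/\ep)$ uses the \emph{same} Navier boundary layer $\beta^{bl}$ because the residual shear jump $-\int_\Sigma\ep\varphi_1\tfrac F2 C^{bl}_1\,dS$ in \eqref{zsigma} is a constant multiple of $\mathbf{e}^1$, uniform in $y_1$; via the jump condition \eqref{4.5)} it cancels precisely that $O(\ep)$ interface term, while the accompanying Couette counterflow $\tfrac F2\ep^{2-\gamma}(C^{bl}_1)^2 x_2^+/\ep^\delta\,\mathbf{e}^1$ cancels the stabilization of this new layer to $C^{bl}_1\mathbf{e}^1$ at the top of the fracture, so that the modified approximation still meets the outer boundary conditions (up to a tail of $\beta^{bl}-(C^{bl}_1,0)$ at $y_2=\ep^{\delta-1}$, which is exponentially small with respect to $\ep^{\delta-1}$ by \eqref{4.16}).

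After this cancellation only two families of error terms survive on the right-hand side. First, the inertial forcing $\int_{\Oe}(\mathbf{v}(\ep)\nabla)\mathbf{v}(\ep)\,{\cal U}^{1,\ep}\,dx$ together with the cross terms produced by the new corrections: since $\delta<1$, the new pieces are $O(\ep^{2-\gamma})$, of strictly smaller order than the $O(\ep^{\delta+1-\gamma})$ boundary layer already present in $\mathbf{v}(\ep)$, so every new cross term is of higher order in $\ep$ than the dominant self-interaction $\tfrac{F^2}{4}\ep^{2\delta+2-2\gamma}(\beta^{bl}(x/\ep)\nabla_x)\beta^{bl}(x/\ep)$, which — exactly as in \eqref{InertFirst}--\eqref{Inert11}, after integrating by parts onto the divergence-free field and using the exponential decay of $\nabla_y\beta^{bl}$ and estimate \eqref{4.60} — is bounded by $C\ep^{3\delta-2\gamma+3/2}\Vert\nabla{\cal U}^{1,\ep}\Vert_{L^2(\Oe)^4}$. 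Second, the new interface term: the Couette counterflow $\tfrac F2\ep^{2-\gamma}(C^{bl}_1)^2 x_2^+/\ep^\delta\,\mathbf{e}^1$ has, across $\Sigma$, a jump $\ep^\gamma[\p_{x_2}(\cdot)]_\Sigma$ of size $C\ep^{2-\delta}$, so this term is controlled, via the trace Poincaré inequality \eqref{Poinc1}, by $C\ep^{2-\delta}\cdot\ep^{1/2}\Vert\nabla{\cal U}^{1,\ep}\Vert_{L^2(\Oe_2)^2}=C\ep^{5/2-\delta}\Vert\nabla{\cal U}^{1,\ep}\Vert_{L^2(\Oe_2)^2}$.

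Collecting these, $\ep^\gamma\Vert\nabla{\cal U}^{1,\ep}\Vert_{L^2(\Oe)^4}^2\le C(\ep^{3\delta-2\gamma+3/2}+\ep^{5/2-\delta})\Vert\nabla{\cal U}^{1,\ep}\Vert_{L^2(\Oe)^4}$, hence $\ep\Vert\nabla{\cal U}^{1,\ep}\Vert_{L^2(\Oe)^4}\le C(\ep^{5/2+3\delta-3\gamma}+\ep^{7/2-\delta-\gamma})$. Hypothesis ({\bf H3}), $4\delta\le 2\gamma+1$, is exactly the inequality $7/2-\delta-\gamma\ge 5/2+3\delta-3\gamma$, so the first exponent is the smaller one and $\ep\Vert\nabla{\cal U}^{1,\ep}\Vert_{L^2(\Oe)^4}\le C\ep^{5/2+3\delta-3\gamma}$; this is a genuine gain over \eqref{4.84} precisely because ({\bf H1}) gives $3\delta-2\gamma>0$. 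Finally, the three remaining norms in \eqref{4.85} follow from $\Vert\nabla{\cal U}^{1,\ep}\Vert_{L^2(\Oe)^4}\le C\ep^{3\delta-3\gamma+3/2}$ by the Poincaré inequalities \eqref{Poinc2}, \eqref{Poinc1} and \eqref{Poinc4} of Lemma~\ref{L1}, applied on $\Oe_2$ and on $\O_1^{\ep,\delta}$ respectively.

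The only part that is more than bookkeeping is the last one invoked above: one must verify carefully that every \emph{additional} cubic and quadratic interaction created by inserting the two new pieces into $(\mathbf{v}(\ep)\nabla)\mathbf{v}(\ep)$ and into the linearized inertia is strictly of higher order in $\ep$ than the two retained contributions, and in particular that one never picks up a net factor $\ep^{-\delta}$ from the $x_2/\ep^\delta$ profiles or $\ep^{-1}$ from differentiating $\beta^{bl}(x/\ep)$ without compensating it through the thinness of the effective support of $\nabla_y\beta^{bl}$ near $\Sigma$ — this is handled precisely as in \eqref{InertFirst}, integrating by parts onto ${\cal U}^{1,\ep}$ and using \eqref{4.16}, \eqref{4.32} and \eqref{4.60}. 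One must also recheck, as in Proposition~\ref{P4.14}, that ({\bf H1}) still absorbs $({\cal U}^{1,\ep}\nabla)\mathbf{v}(\ep)+(\mathbf{v}(\ep)\nabla){\cal U}^{1,\ep}$ into the viscous term, which is immediate since the dominant component of the approximation is still the Poiseuille flow of size $\ep^{2\delta-\gamma}$.
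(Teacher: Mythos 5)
Your proposal is correct and follows essentially the same route as the paper: the added $O(\ep^{2-\gamma})$ boundary layer and Couette counterflow cancel the residual interface term $-\int_{\Sigma}\ep\varphi_1\frac{F}{2}C^{bl}_1\,dS$, leaving a new shear jump of order $\ep^{2-\delta}$ estimated as in (\ref{Resshear}) by $C\ep^{5/2-\delta}\Vert\nabla{\cal U}^{1,\ep}\Vert_{L^2(\Oe)^4}$, which hypothesis ({\bf H3}) makes subordinate to the inertia contribution $C\ep^{3\delta-2\gamma+3/2}\Vert\nabla{\cal U}^{1,\ep}\Vert_{L^2(\Oe)^4}$, so the energy estimate and Lemma \ref{L1} give (\ref{4.85}). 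Your version merely spells out the bookkeeping that the paper leaves implicit after Theorem \ref{T4.17}.
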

{ The new shear stress jump term generated by  correction (\ref{corr1}) is given by $\displaystyle -\int_{\Sigma} \ep^{2-\delta} \varphi_1    \frac{F}{2}  (C^{bl}_1)^2   \ dS$. Then, the corresponding estimate (\ref{Inert22}) in the proof of Theorem \ref{T4.17} takes the form
\begin{equation}\label{Resshear}
    | \int_{\Sigma} \ep^{2-\delta} {\cal U}^\ep_1    \frac{F}{2}  (C^{bl}_1)^2   \ dS | \leq C \ep^{5/2 -\delta}  || \nabla {\cal U}^\ep ||_{L^2 (\Oe)^4}.
\end{equation}
Due to hypothesis ({\bf H3}), we have $5/2 -\delta > 3\delta -2\gamma +3/2$ and} the new error terms  are less important than the leading inertia terms.\vskip6pt
Finally, we correct the inertia term effects. { We note that it is multiplied by a small parameter $\displaystyle \ep^{\delta -2\gamma +2} $. We follow the idea from \cite{BMPAM:96} and expand the solutions to the nonlinear boundary layer problem (\ref{BJR5.2})-(\ref{BJR5.5}) in powers of that parameter. As already explained in the beginning of the section, the solutions of \ref{BJR5.2})-(\ref{BJR5.5})  take the form $ \displaystyle \{ \beta^0 +\frac{F}{2} \ep^{\delta -2\gamma +2} \beta^1 +\dots  , \pi^0 + \frac{F}{2} \ep^{\delta -2\gamma +2} \pi^1 +\dots \}$.
Furthermore,  the $1$-periodicity of the geometry in $y_1$-direction allows to replace $\beta^0$ by $\beta^{bl}$.
It is similar with $\beta^1$. We recall that the  leading error term for ${\cal U}^{1, \ep}$ results from $(\beta^{bl} \nabla ) \beta^{bl} $.}
We introduce the boundary layer problem for $\beta^{1, bl}$:
\begin{gather} - \triangle _y \beta^{1, bl}   +  \nabla_y \pi^{1, bl} = (\beta^{bl}\nabla_y ) \beta^{bl} \qquad \hbox{ in
}  \quad  Z_{BL} ,\label{BLRRR5.2}\\
\div_y \beta^{1,bl} =0\qquad \hbox{ in }  \quad Z_{BL} , \label{BLRRR5.3} \\
\nabla_y \beta^{1, bl}  \in  L^2 ( Z_{BL} )^4  \quad \mbox{ and } \quad \beta^{1, bl}  \in  L^2_{loc} ( Z_{BL} )^2 , \label{BLRRRa} \\
\beta^{1, bl} =0 \quad \hbox{ on }
\cup_{k=1}^{\infty} ( \p Y_s -\{ 0,k \} ), \quad  \mbox{ and} \quad \{ \beta^{1, bl} , \pi^{1, bl} \} \,
\hbox{ is } 1 - \hbox{periodic in } y_1 .\label{BLRRR5.5}
\end{gather}
The forcing term decays exponentially. Following \cite{JaMi2}, we know
 that the system (\ref{BLRRR5.2})-(\ref{BLRRR5.5}) describes a
boundary layer, i.e. $\beta^{1, bl} $ and $ \omega^{1, bl} $
stabilize exponentially towards $C^{bl}_{11} \mathbf{e}^1$ and $C_{\pi 1}$ , when $\vert y_2\vert \to
\infty$.
Then, the correction reads
\begin{gather}
    {\cal U}^{2,\ep } =  {\cal U}^\ep  - \frac{F}{2} C^{bl}_1 \ep^{2 -\gamma} \beta^{ bl} (\frac{x }{ \ep})  + \frac{F}{2} \ep^{2 -\gamma} (C^{bl}_1)^2 {
\frac{x_2^+ }{ \ep^\delta}}  \mathbf{e}^1  + \\
  +(\frac{F}{2})^2 \ep^{2\delta +3 -3\gamma} \beta^{1, bl} (\frac{x }{ \ep})  { -} (\frac{F}{2})^2 \ep^{2\delta +3 -3\gamma} C^{bl}_{11} {
\frac{x_2^+ }{ \ep^\delta}}  \mathbf{e}^1 , \label{corr3}
\end{gather}
 In complete analogy with Theorem \ref{T4.17} we prove Theorem \ref{T4.177}.
 \vskip0pt {  To obtain estimate (\ref{4.885}) from Theorem \ref{T4.177}, it is  enough to note that after (\ref{InertFirst}), the leading remaining inertia terms give a contribution bounded by}
 $$       C \ep^{2\delta +5/2 -2\gamma} || \nabla {\cal U}^{2,\ep } ||_{L^2 (\Oe)^4}        $$
  { Next, using hypothesis ({\bf H1}), we obtain that  $5/2 -\delta < 2\delta -2\gamma +5/2$. Furthermore,  the leading order term is the shear stress jump term
  $$  \int_{\Sigma} \ep^{2-\delta} \varphi_1    \frac{F}{2}  (C^{bl}_1)^2   \ dS.  $$
 It is estimated by (\ref{Resshear}), which yields (\ref{4.885}).  }

\end{document}